\documentclass[12pt]{amsart}
\usepackage{amsmath, amsfonts,amsthm,times,graphics}

\textwidth143mm
\textheight235mm
\voffset-10mm
\hoffset-10mm

 \makeatletter
\renewcommand*\subjclass[2][2010]{%
  \def\@subjclass{#2}%
  \@ifundefied{subjclassname@#1}{%
    \ClassWarning{\@classname}{Unknown edition (#1) of Mathematics
      Subject Classification; using '2010'.}%
  }{%
    \@xp\let\@xp\subjclassname\csname subjclassname@#1\endcsname
  }%
}

 \makeatother

\newtheorem{theorem}{Theorem}
\newtheorem{lemma}{Lemma}
\newtheorem{corollary}{Corollary}
\newtheorem{conjecture}{Conjecture}
\newtheorem{proposition}{Proposition}
\newtheorem{fact}{Fact}
\newtheorem{definition}{Definition}
\theoremstyle{definition}
\newtheorem{remark}{Remarks}

 \makeatletter
  \makeatother
\begin{document}

\title{Variations of  Kurepa's left factorial hypothesis}

 \author{Romeo Me\v strovi\' c}

\address{Maritime Faculty, University of Montenegro, 
Dobrota 36, 85330 Kotor, Montenegro}
\email{romeo@ac.me}

\vspace{5mm}

\maketitle
 \begin{abstract}
Kurepa's hypothesis asserts that 
for each integer $n\ge 2$ the greatest common divisor of 
$!n:=\sum_{k=0}^{n-1}k!$ and $n!$ is $2$. 
Motivated by an equivalent formulation of this hypothesis involving 
derangement numbers, here we give 
a formulation of  Kurepa's hypothesis in terms of 
divisibility of any Kurepa's determinant $K_p$ of order $p-4$
 by  a prime $p\ge 7$. 
In the previous version of this article we have proposed
the strong Kurepa's hypothesis involving 
a general Kurepa's determinant $K_n$ with any integer $n\ge 7$.
We prove the ``even part" of this hypothesis which 
can be considered as a generalization of Kurepa's hypothesis.
However, by using a congruence for $K_n$ involving 
the  derangement number $S_{n-1}$ with an odd integer $n\ge 9$, 
we find that the integer $11563=31\times 373$ is a counterexample to 
the ``odd composite part'' of strong Kurepa's hypothesis.    
 
We also present some remarks, divisibility properties 
and computational results closely related to the questions 
on Kurepa's hypothesis involving  derangement numbers and Bell numbers.  
 \end{abstract}

\vspace{4mm}

{\it $2010$ Mathematics subject classification}: Primary 05A10; 
Secondary 11B65, 11B73, 11A05, 11A07. 

\vspace{2mm}

{\it Key words and phrases}: left factorial function, Kurepa's hypothesis, 
derangement numbers,
Kurepa's determinant, congruence modulo a prime, 
Kurepa's event, strong Kurepa's hypothesis,
Kurepa's binary determinant, Bell numbers. 

  \section{Remarks on Kurepa's hypothesis}
In 1971 Dj. Kurepa \cite{ku} 
introduced the {\it left factorial function} $!n$
which is defined as 
  $$
!0=0,\quad !n=\sum_{k=0}^{n-1}k!,\quad n\in \Bbb N.
  $$ 
$!n$ is the  Sloane's sequence A003422 in \cite{sl}.

For more details of the following conjecture and its reformulations
 see a overview of A. Ivi\'c and \v{Z}. Mijajlovi\'{c} \cite{im1}.

 \begin{conjecture}[{\bf Kurepa's left factorial hypothesis}]
For each positive integer $n\ge 2$ the greatest common divisor of 
$!n$ and $n!$ is $2$.
 \end{conjecture}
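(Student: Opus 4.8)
Since this is a long-standing open problem, what follows is a program of attack rather than a complete proof. The plan is to reduce the conjecture to a single family of prime congruences and then to analyse those. First I would dispose of the prime $2$ directly: since $!2=2$ and $k!$ is even for every $k\ge 2$, the number $!n$ is even for all $n\ge 2$, while $!n\equiv 2\pmod 4$ for every $n\ge 4$ (because $!4=10$ and $4\mid k!$ for $k\ge 4$); since moreover $4$ divides neither $2!$ nor $3!$, the prime $2$ divides $\gcd(!n,n!)$ exactly once for every $n\ge 2$. For an odd prime $p$ one has $p\mid n!$ exactly when $p\le n$, and $k!\equiv 0\pmod p$ for all $k\ge p$, so $!n\equiv {!p}\pmod p$ whenever $p\le n$. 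Hence $\gcd(!n,n!)=2$ for every $n\ge 2$ is \emph{equivalent} to the assertion that $!p\not\equiv 0\pmod p$ for every odd prime $p$, and that is the statement I would attempt to prove.

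Next I would pass to derangement numbers, following the reformulation of Kurepa's hypothesis indicated above. Writing $D_m=m!\sum_{k=0}^m(-1)^k/k!$ for the $m$-th derangement number, Wilson's theorem together with the elementary congruence $(p-1)!/k!\equiv(-1)^{p-1-k}(p-1-k)!\pmod p$ yields $D_{p-1}\equiv {!p}\pmod p$, so the conjecture becomes the claim that $p$ does not divide $D_{p-1}$ for any odd prime $p$. From here the natural approach is to realise the residue $D_{p-1}\bmod p$ as a coordinate of a rigid algebraic object over $\Bbb F_p$ — for instance an element of $\Bbb F_p$ or $\Bbb F_{p^2}$ built from the companion matrix of the recurrence $D_m=(m-1)(D_{m-1}+D_{m-2})$, or from the generating function $e^{-x}/(1-x)$ and its connection with the Bell numbers — and then to force that object to be a unit by a parity, order, or degree argument in the relevant finite field.

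The step I expect to be the genuine obstacle is precisely this last one: at present there is no mechanism that makes $\sum_{k=0}^{p-1}k!\not\equiv 0\pmod p$ hold uniformly in $p$. The partial factorial sum carries no multiplicative structure, and each known reformulation — through $D_{p-1}$, through Bell numbers, through incomplete Gamma values, or through a companion matrix over $\Bbb F_p$ — only replaces one opaque residue by another, so that the available tools yield equivalent restatements and conditional results rather than an unconditional proof. For this reason the realistic aim here is not to settle the conjecture but to record new equivalent formulations of it — in particular the divisibility criterion $p\mid K_p$ for Kurepa's determinant announced in the abstract — and to verify the hypothesis computationally for all $n$ below a large bound, in the hope that one of those formulations eventually renders the decisive non-vanishing step tractable.
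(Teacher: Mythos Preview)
Your proposal is appropriate: the statement is Conjecture~1, which the paper does \emph{not} prove (it is explicitly described there as an unsolved problem since 1971, with a withdrawn 2004 proof attempt), so there is no ``paper's own proof'' to match. You correctly recognise this and offer a reduction-and-reformulation programme rather than a proof.

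Your reductions coincide with those the paper records. The reduction to the prime case, $!p\not\equiv 0\pmod{p}$ for every odd prime $p$, is exactly Kurepa's own equivalence cited in Remarks~1. Your passage to derangement numbers via Wilson's theorem and the congruence $(p-1)!/k!\equiv(-1)^{p-1-k}(p-1-k)!\pmod{p}$ reproduces Mijajlovi\'{c}'s reformulation (equation~(1) in the paper, and part~(ii) of Theorem~4); your computation $D_{p-1}\equiv{!p}\pmod{p}$ is correct. Your closing assessment --- that the residual non-vanishing step has no known mechanism and that the realistic contribution is to record new equivalent formulations such as the determinant criterion $p\nmid K_p$ --- is precisely the stance the paper adopts (Theorem~1 and Proposition~1). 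One small point: the paper reserves the symbol $D_n$ for the auxiliary determinant of Lemma~1 and writes $S_n$ for the derangement numbers, so if this text is to sit inside the paper you should rename your $D_m$ accordingly.
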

Kurepa's hypothesis and its equivalent formulation
 appear in R. Guy's classic book \cite{gu} as problem B44 which asserts that
   $$
!n\not\equiv 0\pmod{n}\quad {\rm for\,\, all}\,\, n>2.
 $$ 
Alternating sums of factorials $\sum_{k=1}^{n-1}(-1)^{k-1}k!$
are involved in  Problem B43 in \cite{gu} which was solved 
by M. \v{Z}ivkovi\'{c} \cite{z}.

Further, Kurepa's hypothesis was tested by computers for $n<1000000$
by Mijajlovi\'{c} and Gogi\'{c} in 1991 (see e.g., \cite{mi} and \cite{ko}). 
Kurepa's left factorial hypothesis 
(or in the sequel, written briefly {\it Kurepa's hypothesis}) is an unsolved 
problem since 1971 and there seems to be no significant progress in solving 
it. 
Notice that a published proof of Kurepa's hypothesis in 2004 by 
D. Barsky and B. Benzaghou \cite[Th\'{e}or\`{e}me 3, p. 13]{bb} contains some 
irreparable calculation errors in the proof of Theorem 3 of this article 
\cite{bb2}, and this proof is therefore withdrawn.

 However, there are several statements equivalent to  Kurepa's hypothesis
(see e.g., Kellner \cite[Conjecture 1.1 and Corollary 2.3]{ke}, 
Ivi\'c and Mijajlovi\'{c} \cite{im1}, 
Mijajlovi\'{c} \cite[Theorem 2.1]{mi1},  Petojevi\'{c} 
\cite{pe1} and \cite[Subsection 3.3]{pe}, 
Petojevi\'{c}, \v{Z}i\v{z}ovi\'{c} and S. Cveji\'{c} 
\cite[Theorems 1 and 2]{pzc}, 
\v{S}ami \cite{sa}, Stankovi\'{c} \cite{st}, \v{Z}ivkovi\'{c} \cite{z}). 
Moreover, there are numerous identities involving 
the left factorial function $!n$ and related generalizations 
(see Carlitz \cite{ca}, Milovanovi\'{c} \cite{mil}, 
Petojevi\'{c} and Milovanovi\'{c} \cite{pm}, 
  Slavi\'{c} \cite{sla}, Stankovi\'{c} \cite{st},  Stankovi\'{c} and 
\v{Z}i\v{z}ovi\'{c} \cite{stz}). Moreover, 
Kurepa's hypothesis is closely related to the Sloane's sequences 
A049782, A051396, A051397, A052169, A052201, A054516 and A056158 
\cite{sl}.

  \begin{remark}
{\rm It was proved in \cite[p. 149, Theorem 2.4]{ku} that 
Kurepa's hypothesis is equivalent to the assertion that 
$!p\not\equiv 0(\bmod{\,p})$ for all odd primes $p$. According to 
Mijajlovi\'{c} \cite{mi1}, Kurepa's hypothesis is equivalent to 
  $$
\sum_{k=0}^{p-1}\frac{(-1)^k}{k!}\not\equiv 0\pmod{p}\quad 
{\rm for\,\, each\,\, prime}\quad p\ge 3.\leqno(1)
  $$
Notice that  $S_n:=n!\sum_{k=0}^n(-1)^k/k!$ $(n=0,1,2,\ldots)$
is the {\it subfactorial function} whose values   
are the well known {\it derangement numbers}  which give the number 
of permutations of $n$ elements without any 
fixpoints (Sloane's sequence A000166 in \cite{sl}. 
Certainly $S_{p-1}$ 
can take  any of the $p$ possible values $(\bmod{\,p})$.
Assuming that $S_{p-1}$ takes these values  randomly, 
the ``probability'' that $S_{p-1}$ takes any particular value
(say 0) is $1/p$. From this 
and using a heuristic argument based on ``log log philosophy'' 
(see e.g., \cite{me}), we might argue that  the  number of primes $p$ 
in an interval $[x,y]$ such that 
$\min\{S_{p-1}(\bmod{\,p}),p-S_{p-1}(\bmod{\,p})\}\le d$ for a ``small'' 
nonnegative integer $d$ is expected to be
       $$
K_d(x,y):=(2d+1)\sum_{x\le p\le y}\frac{1}{p}\approx (2d+1)
\log\frac{\log y}{\log x}.\leqno(2)
       $$
(Here the second estimate is a classical asymptotic formula of Mertens 
\cite[p. 94]{fi}). In particular, for the interval $[x,y]=[23,2^{23}]$ with $d=9$ 
the above estimate implies that  $K(23,2^{23})\approx 30.8977$.
M. \v{Z}ivkovi\'{c} \cite[Table 1]{z} verified that 
$S_{p-1}:=\sum_{k=0}^{p-1}(-1)^k/k!\not\equiv 0(\bmod{\,p})$ for all 
odd primes $p<2^{23}$. On the other hand, it follows by (2) that 
the expected number of such odd primes less than $2^{23}$ is about 
$\log\frac{\log 2^{23}}{\log 3}=2.67493$; 
also, the expected number of such odd primes from the interval 
$[353,2^{23}]$ is about $\log\frac{\log 2^{23}}{\log 353}=0.999729$.
Accordingly to these two expected numbers of primes 
which would be the ``counterexamples to  Kurepa's hypothesis''
and in view of related \v{Z}ivkovi\'{c}'s computation  up to 
$2^{23}$ \cite{z}, it may be of interest to determine 
``the probability'' that $S_{p-1}\not\equiv 0(\bmod{\,p})$ 
for each odd prime $p$ such that $x\le p<2^{23}$ for 
a given fixed $x$. Assuming 
the fact that for every pair of different odd primes $p$ and $q$ 
the events $A$ - ``$p$ satisfies the congruence $S_{p-1}\equiv 0(\bmod{\,p})$''
and $B$ - ``$q$ satisfies the congruence $S_{q-1}\equiv 0(\bmod{\,q})$''
are independent, and in view of the previously mentioned  
heuristic probability  argument, we find that the probability $P(K[x,y])$ of 
the {\it Kurepa's event} $K[x,y]$ - ``the congruence $S_{p-1}\equiv 
0(\bmod{\,p})$ ($3\le x\le y$) is satisfied for none odd prime $p$ such that 
$x\le p\le y$'' is equal to 
  $$
P(K[x,y])=\prod_{x\le p\le y\atop{p\,\, \rm a\,\, prime}}
\left(1-\frac{1}{p}\right)\leqno(3)
  $$
(the above product ranges over all odd primes $p$ with 
$x\le p\le y$). 

In particular, using the fact that  
the greatest prime which is less than $2^{23}=8388608$
is the $564163$th prime $p_{564163}=8388593$, applying (3) in 
{\tt Mathematica} 8, we find that, $P(K[4,2^{23}])=0.105652$. 
This means that the probabilty that 
$S_{p-1}:=\sum_{k=0}^{p-1}(-1)^k/k!\not\equiv 0(\bmod{\,p})$ 
for all primes $p$ with $5\le p<2^{23}$ is equal to $0.105652$.
Of course, the value $0.105652$ is not sufficiently  small, and 
hence, for the verificitaion of the truth of Kurepa's hypothesis,
it would be useful further computations of $S_{p-1}$ 
modulo primes $p>2^{23}$. For example, $P(K[2^{23},5000000])=0.899309$
shows that the probability that $S_{p-1}\equiv 0(\bmod{\,p})$
for at least one  prime less than the $3001134$th prime  
$p_{3001134}=49999991$ is greater than $10\%$.

Furthermore, from the right part of Table 1 in \cite{z} we also see  that in 
the interval $[23,2^{23}]$ there are 27 primes  $p$ satisfying the condition 
$\min\{S_{p-1}(\bmod{\,p}),p-S_{p-1}(\bmod{\,p})\}\le 9$. 
Our computation in {\tt Mathematica} 8 shows that 
in the interval $[1000,100000]$ there are 118  primes $p$ satisfying the 
condition $\min\{S_{p-1}(\bmod{\,p}),p-S_{p-1}(\bmod{\,p})\}\le 99$. 
On the other hand, by the estimate (2) the expected number of such primes 
is $\approx 199\log\frac{\log 100000}{\log 1000}=101.654$.

Moreover, by using the mentioned  heuristic argument, 
we might argue that  the  number of primes $p$ 
in the interval $[2^{23},10^{19}]$ such that 
$S_{p-1}\equiv 0(\bmod{\,p})$ is expected to be
$\log\frac{\log 10^{19}}{\log 2^{23}}\approx 1.00949$.
In other words, under the validity of presented heuristic arguments 
we have the following fact.}
\end{remark}

\begin{fact} 
Under the validity of heuristic arguments presented in Remarks $1$,
it can be expected one prime less than $10^{19}$ 
which is ``a counterexample'' to Kurepa's hypothesis.
 \end{fact}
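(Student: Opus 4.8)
The plan is to obtain Fact~1 as a direct specialisation of the heuristic model developed in Remarks~1, taken at the threshold $d=0$. Recall that, for a nonnegative integer $d$, the heuristic count of primes $p\in[x,y]$ with $\min\{S_{p-1}\,(\bmod\,p),\,p-S_{p-1}\,(\bmod\,p)\}\le d$ is $K_d(x,y)=(2d+1)\sum_{x\le p\le y}1/p$, which by Mertens' asymptotic formula is $\approx(2d+1)\log(\log y/\log x)$. A prime $p$ fails Kurepa's hypothesis exactly when $S_{p-1}\equiv 0\,(\bmod\,p)$, i.e.\ when the minimum above equals $0$; this is precisely the case $d=0$, where $K_0(x,y)\approx\log(\log y/\log x)$.

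First I would fix $x=2^{23}$, since \v{Z}ivkovi\'c \cite[Table~1]{z} has already verified $S_{p-1}\not\equiv 0\,(\bmod\,p)$ for every odd prime $p<2^{23}$, so that any hypothetical counterexample must lie above $2^{23}$. Taking $y=10^{19}$ and evaluating, one gets $\log(\log 10^{19}/\log 2^{23})=\log\!\big(19\log 10/(23\log 2)\big)\approx\log(43.749/15.942)\approx 1.00949$. Hence the expected number of primes $p$ with $2^{23}\le p\le 10^{19}$ satisfying $S_{p-1}\equiv 0\,(\bmod\,p)$ is about $1$, which is exactly the statement of Fact~1. (This is the same numerical value already recorded at the end of Remarks~1, so in effect Fact~1 only makes explicit a prediction implicit there.)

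It should be emphasised that the obstacle here is not to a calculation but to a genuine proof: Fact~1 is a heuristic prediction, stated as such, and not a theorem in the strict sense. Its validity rests on the two unprovable modelling assumptions highlighted in Remarks~1 — that $S_{p-1}\,(\bmod\,p)$ behaves like a uniformly random residue in $\{0,1,\dots,p-1\}$ as $p$ runs over the primes, and that the events ``$S_{p-1}\equiv 0\,(\bmod\,p)$'' for distinct primes $p$ are independent. Under these assumptions the one-line computation above suffices; establishing them rigorously is out of reach and would, in effect, settle Kurepa's hypothesis itself.
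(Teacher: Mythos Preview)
Your proposal is correct and follows essentially the same approach as the paper: the paper does not give a separate proof of Fact~1 but simply records, at the end of Remarks~1, the computation $\log(\log 10^{19}/\log 2^{23})\approx 1.00949$ and then states Fact~1 as its verbal restatement. Your derivation via the $d=0$ case of the heuristic $K_d(x,y)$, together with the choice $x=2^{23}$ justified by \v{Z}ivkovi\'c's verification, reproduces exactly this reasoning, and your closing caveat that this is a heuristic prediction rather than a theorem is entirely appropriate.
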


\section{A linear algebra formulation of Kurepa's hypothesis}

Motivated by a linear algebra formulation of Kurepa's hypothesis 
given in \cite{me2} we give the following definition.

  \begin{definition}
{\rm For any integer $n\ge 7$ the {\it Kurepa's determinant} 
$K_n$ is the determinant of order $n-4$ defined as}
  $$
K_n:=\left|\begin{array}{rrrrrrrrrcrr}
1 & 1& 1 & 1 & 1 & 1 & \dots   &1 & 1  & 1 & 1 &3 \\
3 & 1 & 1 & 1 & 1 & 1 &\dots    &1 & 1 & 1 & 1 &2 \\
1 & 4 & 1 & 1 & 1 & 1 &\dots  &1 & 1 & 1 & 1 & 2 \\
0 & 1 & 5 & 1 & 1 & 1 &\dots  &1 & 1 & 1 & 1 & 2\\
0 & 0 & 1 & 6 & 1 & 1 &\dots  &1 & 1 & 1 & 1 & 2\\
0 & 0 & 0 & 1 & 7 & 1 & \dots &1   & 1 & 1 & 1 & 2\\
0 & 0 & 0 & 0 & 1 & 8 & \dots &1  & 1 & 1 & 1 & 2\\
\vdots & \vdots &\vdots &\vdots&\vdots &\vdots &\vdots &\vdots &\vdots&\ddots 
&\vdots &\vdots\\
0 & 0 & 0 & 0 & 0 & 0 &\dots  & 0 &  1 &  n-4 & 1 & 2\\
0 & 0 & 0 & 0 & 0 & 0 & \dots & 0 & 0 & 0 & 1 & -4
\end{array}\right|\leqno(4)
  $$
\end{definition}
First few values of $K_n$ are as follows: 
$K_7=\left|\begin{array}{rrr} 1 &1&3\\ 3 & 1 & 2\\ 0 &1 &-4\end{array}
\right|=15$, $K_8=-47$, $K_9=197$, $K_{10}=-1029$,
$K_{11}=6439$, $K_{12}=-46927$, $K_{13}=390249$, $K_{14}=-3645737$,
$K_{15}=37792331$, $K_{16}=-430400211$ and $K_{17}=5341017373$.

Motivated by the reformulation of Kurepa's hypothesis given by (1) of 
Remarks 1, and using a linear algebra approach 
(working in the field $\Bbb F_{p}=\{0,1,\ldots,p-1\}$ modulo $p$), 
we can establish a reformulation of Kurepa's hypothesis given by the 
following result.

\begin{theorem}[\cite{me2}] 
Let $p$ be an odd prime. Then the following statements are 
equivalent.
 \begin{itemize} 
\item[(i)] Kurepa's hypothesis holds, i.e., 
for each positive integer $n\ge 2$ the greatest common divisor of $!n$
and $n!$ is $2$.
\item[(ii)]  For each prime $p\ge 7$ the Kurepa's determinant 
$K_p$ satisfies the condition $K_p\not\equiv 0(\bmod{\,p})$.
 \end{itemize}
\end{theorem}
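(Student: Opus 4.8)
By the equivalent form $(1)$ of Kurepa's hypothesis recalled in Remarks $1$, the hypothesis holds if and only if $\sum_{k=0}^{p-1}(-1)^k/k!\not\equiv 0\pmod p$ for every prime $p\ge 3$. Since $S_{p-1}=(p-1)!\sum_{k=0}^{p-1}(-1)^k/k!$ and $(p-1)!\equiv -1\pmod p$ by Wilson's theorem, this sum is congruent to $-S_{p-1}$ modulo $p$, so Kurepa's hypothesis is equivalent to the assertion that $S_{p-1}\not\equiv 0\pmod p$ for every prime $p\ge 3$. As $S_2=1$ and $S_4=9$, the primes $p=3$ and $p=5$ fulfil this automatically, and hence Kurepa's hypothesis is equivalent to
$$
S_{p-1}\not\equiv 0\pmod p\qquad\text{for every prime }p\ge 7.
$$
Since $8$ is a unit modulo every such $p$, the equivalence of (i) and (ii) will follow once one proves the single congruence
$$
K_p\equiv -\frac{S_{p-1}}{8}\pmod p\qquad\text{for every prime }p\ge 7,
$$
so that this congruence is the real content of the theorem.

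The plan is first to establish the closed form
$$
K_n=(-1)^{n+1}\bigl(S_{n-4}+S_{n-5}+6\,(n-5)!\bigr)\qquad (n\ge 7),
$$
where $S_j$ denotes the $j$-th derangement number. To this end I would perform the row operations $R_i\mapsto R_i-R_{i+1}$ for $i=1,\dots,n-5$ on the determinant $(4)$; these do not change its value, and because of the staircase of entries $3,4,\dots,n-4$ just below the diagonal together with the constant last column, every interior row becomes a row with only three nonzero entries — on a sub-sub-diagonal, a sub-diagonal and the diagonal — while the first two and the last two rows take their own easily written sparse forms. Expanding the resulting determinant along its last column then leaves exactly three terms: the $(n-4,n-4)$-minor is lower triangular with a zero entry on its diagonal and hence vanishes; the $(n-5,n-4)$-minor is lower triangular and equals $\pm(n-5)!$; and the $(1,n-4)$-minor is a tridiagonal determinant whose three-term recursion for its leading principal minors is essentially the derangement recursion $S_m=(m-1)(S_{m-1}+S_{m-2})$, which evaluates this minor to $S_{n-4}+S_{n-5}$. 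Collecting the three contributions, with the appropriate parity-dependent signs, yields the displayed closed form. (Equivalently, comparing the expansions in orders $n$ and $n+1$ produces the recursion $K_{n+1}=-(n-4)K_n+(-1)^n S_{n-4}$ with $K_7=15$, which the closed form is readily seen to satisfy.)

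I would then specialise to $n=p$, an odd prime with $p\ge 7$, so that $(-1)^{n+1}=1$ and $K_p=S_{p-4}+S_{p-5}+6\,(p-5)!$, and reduce this modulo $p$. Wilson's theorem gives $(p-j)!\equiv(-1)^j/(j-1)!\pmod p$ for $1\le j\le 6$; in particular $(p-4)!\equiv\tfrac{1}{6}$ and $(p-5)!\equiv-\tfrac{1}{24}$ in $\Bbb F_p$. Writing $S_j=j!\,E_j$ with $E_j=\sum_{k=0}^{j}(-1)^k/k!$ and using $E_j-E_{j-1}=(-1)^j/j!$, one telescopes $E_{p-4}$ and $E_{p-5}$ back to $E_{p-1}=\sum_{k=0}^{p-1}(-1)^k/k!\equiv-S_{p-1}\pmod p$, finding $E_{p-4}\equiv-S_{p-1}+4$ and $E_{p-5}\equiv-S_{p-1}+10$. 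Substituting gives $S_{p-4}+S_{p-5}\equiv-\tfrac{S_{p-1}}{8}+\tfrac{1}{4}$ and $6\,(p-5)!\equiv-\tfrac{1}{4}$, whence $K_p\equiv-S_{p-1}/8\pmod p$, as required. I expect the main difficulty to lie in the determinant evaluation of the second step: one must carry out the row operations and the subsequent cofactor expansion carefully near the top and the bottom of $(4)$, control the parity-dependent signs of the cofactors, and recognise the derangement recursion concealed in the tridiagonal minor; once that is in place, the reduction modulo $p$ by Wilson's theorem in the third step is routine, if somewhat lengthy.
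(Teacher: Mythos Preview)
Your argument is correct. The closed form $K_n=(-1)^{n+1}\bigl(S_{n-4}+S_{n-5}+6(n-5)!\bigr)$ checks against all the values listed in the paper, and your reduction modulo $p$ via Wilson's theorem is accurate and yields exactly Proposition~1, namely $K_p\equiv\frac{1}{8}\sum_{k=0}^{p-1}(-1)^k/k!\pmod p$, which together with the equivalence~(1) of Remarks~1 gives the theorem.

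Your route differs from the paper's in one respect worth noting. The paper (as indicated in Remark~2 and in the congruence quoted at the start of the proof of Proposition~4) works modulo~$n$ throughout the determinant manipulation, arriving at the congruence $K_n\equiv -3S_{n-5}-1+180\,(n-7)!\pmod n$ for odd $n\ge 7$, and then at Proposition~1 for prime~$n$. You instead obtain an exact integer identity valid for every $n\ge 7$, and only afterwards reduce. Your identity is strictly stronger: for odd $n$ one has $S_{n-4}=(n-4)S_{n-5}-1\equiv -4S_{n-5}-1\pmod n$ and $6(n-5)!=6(n-5)(n-6)(n-7)!\equiv 180\,(n-7)!\pmod n$, so your formula immediately recovers the paper's congruence and hence also Proposition~4. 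The trade-off is that working modulo $p$ from the outset, as the paper does, lets one discard many terms early and shortens the bookkeeping, whereas your exact evaluation requires more care with the cofactor signs and the boundary rows but yields a result that is reusable outside the modular setting (for instance it makes Corollary~1 on the parity of $K_n$ immediate as well).
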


\begin{remark} 
In order to evaluate the Kurepa's determinant $K_p$
modulo a prime $p\ge 11$, we apply  numerous elementary 
transformations,  and work simultaneously modulo $p$. 
Then we obtain the following result which  in view of  (1) of Remarks 1 
gives an indirect proof of Theorem 1 (\cite{me2}; also 
see the first congruence in the proof of Proposition 4 in Section 
4 with $n=p$). 
  \end{remark}
   \begin{proposition} 
If $p$ is a prime greater than $5$, then 
  $$
K_p\equiv \frac{1}{8}\sum_{k=0}^{p-1}\frac{(-1)^k}{k!}.\leqno(5)
 $$
\end{proposition}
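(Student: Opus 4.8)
The plan is to evaluate the determinant $K_p$ directly by a sequence of elementary row and column operations, carrying everything modulo $p$, and to recognize the partial sums $\sum (-1)^k/k!$ as they emerge from the structure of the matrix.

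First I would set up notation: write the matrix of order $p-4$ in the form $(4)$, with the first column having entries $1,3,0,0,\dots,0$ (a near-bidiagonal pattern in the first block), the diagonal carrying the increasing values $1,4,5,6,\dots,p-4,-4$ starting from the second position, a band of $1$'s above that subdiagonal, and the last column equal to $(3,2,2,\dots,2,-4)^{T}$. The key observation is that rows $2$ through $p-5$ (say) each have the shape $e_{j-1}+(j{+}2)e_j+\sum_{i>j}e_i+2e_{\text{last}}$ up to the boundary terms, so subtracting consecutive rows telescopes most of the $1$'s away. Concretely, I would replace row $R_j$ by $R_j-R_{j+1}$ for $j$ running downward; each such difference kills the tail of $1$'s and leaves a sparse combination involving only a few diagonal-type entries and a controlled contribution to the last column. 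This reduces $K_p$ to a determinant that is essentially bidiagonal except for the last column, whose entries now encode the running differences of the $2$'s (which vanish) together with boundary corrections at the top and bottom rows.

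Next I would expand along the resulting near-bidiagonal structure. After the row differencing, the diagonal entries are shifted versions of $j+2$ (for $j=1,\dots,p-5$) together with the final $-4$, and cofactor expansion down the last column produces a sum of terms, the $k$-th of which is $\pm$ (a product of consecutive integers $=$ a factorial-type quantity) times the surviving last-column entry. Working modulo $p$, the product of the consecutive integers $3\cdot4\cdots(k+2)$ is $(k+2)!/2$, and its inverse modulo $p$ is $2/(k+2)!$; the alternating signs from cofactor expansion supply the $(-1)^k$. Summing these contributions, and tracking the boundary terms coming from the atypical first two rows (entries $1,1,\dots,1,3$ and $3,1,\dots,1,2$) and the last row $(0,\dots,0,1,-4)$, I expect the sum to assemble into $\tfrac12\cdot\tfrac14\sum_{k=0}^{p-1}(-1)^k/k!$, i.e. $\tfrac18\sum_{k=0}^{p-1}(-1)^k/k!$ as claimed in $(5)$. (Here one uses $k!\equiv 0\pmod p$ for $k\ge p$ is irrelevant since the sum only runs to $p-1$; the factors $1/8$ arise as $1/2$ from the $3\cdot4\cdots$ products and $1/4$ from the normalization forced by the $3$'s and $-4$ in the first and last columns.)

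The main obstacle will be the careful bookkeeping of the boundary: the first column is $(1,3,0,\dots,0)^T$ rather than a clean unit vector, the first two diagonal positions carry the irregular values $1$ and $4$ instead of fitting the pattern $j+2$, and the last row $(0,\dots,0,1,-4)$ interacts with the last column in a way that must be handled before the telescoping is valid. I would dispose of this by first performing a single column operation — subtracting a suitable multiple of the penultimate column from the last — to simplify the $-4$, then doing the row differencing on rows $2$ through $p-5$, and finally treating the top two rows and bottom row as explicit $3\times 3$ or $2\times 2$ corrections. One must also verify that no pivot used in the reduction is divisible by $p$: the diagonal entries are $4,5,\dots,p-4$, all strictly between $1$ and $p$, hence invertible mod $p$, so the reduction is legitimate over $\mathbb{F}_p$. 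Once the reduction is set up correctly, collecting terms is routine, and the factor $1/8$ falls out of the constants $3$ and $-4$ sitting in the corners of the matrix.
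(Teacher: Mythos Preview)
Your plan is the paper's plan: reduce $K_p$ by elementary row/column operations over $\mathbb{F}_p$ until the derangement sum appears. The paper does not actually carry out the computation here either; it cites an unpublished manuscript and records only the intermediate identity
\[
K_n\equiv -3S_{n-5}-1+180\,(n-7)!\pmod{n}
\]
(quoted in the proof of Proposition~4). For $n=p$ prime, Wilson's theorem gives $(p-7)!\equiv -1/720$, hence $180\,(p-7)!\equiv -1/4$, and iterating $S_m=mS_{m-1}+(-1)^m$ four times yields $S_{p-1}\equiv 24S_{p-5}+10\pmod p$; substituting gives $K_p\equiv -S_{p-1}/8\equiv \tfrac18\sum_{k=0}^{p-1}(-1)^k/k!$. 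So the paper's route is: reduce to $S_{n-5}$ first, then convert to $S_{p-1}$ via the recurrence, rather than assembling the full sum directly from the cofactor expansion as you propose.

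One correction to your reading of the matrix: the first column is $(1,3,1,0,0,\dots,0)^{T}$, not $(1,3,0,\dots,0)^{T}$ --- row~3 begins with a $1$. That extra entry feeds into exactly the boundary bookkeeping you flag at the end. More substantively, your sketch stops at ``I expect the sum to assemble into $\tfrac18\sum(-1)^k/k!$'' and ``the factor $1/8$ falls out of the constants $3$ and $-4$''; this is precisely the part where all the content lives, and as written it is an assertion rather than an argument. The telescoping scheme is sound, the pivots $4,5,\dots,p-4$ are indeed units mod $p$, and the outline would go through --- but you have not yet done the computation that distinguishes a correct constant from a wrong one.
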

Finally, we propose the following conjecture which in view of Theorem 1 may 
be considered as the strong  Kurepa's  hypothesis.
 \begin{conjecture}[{\bf The strong Kurepa's hypothesis}]
For each integer $n\ge 7$ the Kurepa's determinant $K_n$ is not divisible 
by $n$.
 \end{conjecture}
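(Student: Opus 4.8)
The plan is to extend the elementary-transformation computation behind Proposition~1 from a prime modulus $p$ to an arbitrary modulus $n$, and to split the conjecture according to the arithmetic type of $n$, since by Theorem~1 the prime case is already tied to Kurepa's hypothesis itself. One is thus led to treat three pieces separately: (a) $K_n$ is odd for every $n\ge7$, which settles all even $n$ at a stroke (the ``even part''); (b) $K_p\not\equiv0\pmod p$ for odd primes $p$, which by Proposition~1 is \emph{precisely} Kurepa's still-open hypothesis; and (c) $K_n\not\equiv0\pmod n$ for odd composite $n$ (the ``odd composite part''), which one hopes to reduce to a congruence for $K_n$ modulo $n$ involving the derangement number $S_{n-1}$.

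For (a) I would reduce the array (4) modulo $2$, obtaining the Kurepa binary determinant $\overline K_n$ over $\mathbb F_2$: the sub-diagonal entries $k+1$ collapse to the alternating pattern $1,0,1,0,\dots$, the last column becomes $(1,0,\dots,0)$ (since $3\equiv1$ while $2\equiv-4\equiv0\pmod2$), the main diagonal becomes $(1,\dots,1,0)$, and the last row becomes $(0,\dots,0,1,0)$. Expanding $\overline K_n$ first along its last column, where only the top entry $1$ survives, and then along the last row of the resulting minor, where only a single entry $1$ survives, leaves a structured $(n-6)\times(n-6)$ matrix over $\mathbb F_2$ with an all-$1$'s first row and, beneath it, a controlled sub-/super-diagonal shape dictated by the parities of the $k+1$. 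Adding the first row to all the others and then running Gaussian elimination, the periodicity of those parities keeps every pivot equal to $1$, so that $\overline K_n=1$ for all $n\ge7$; equivalently $K_n$ is odd --- consistent with the odd values $K_7,\dots,K_{17}$ listed above. Since no even number divides an odd integer, every even $n$ fails to divide $K_n$, which is the even part of the strong Kurepa hypothesis.

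For (c) I would rerun the row and column operations that prove Proposition~1, now over $\mathbb Z/n\mathbb Z$. The obstacle is that the clean identity (5) uses the inverses $1/k!$ and Wilson's theorem, neither of which is available when $n$ is composite --- indeed $(n-1)!\equiv0\pmod n$ for composite $n>4$ --- so the elimination must be kept denominator-free. Carried out that way, the telescoping of the factorial-type entries $k+1$ reproduces the subfactorial recurrence $S_m=mS_{m-1}+(-1)^m$, and one lands on a congruence that expresses $K_n$ modulo $n$ through $S_{n-1}$ and is valid for odd $n\ge9$ (with a normalizing unit that depends on the arithmetic structure of $n$; for odd primes it recovers (5), hence the equivalence with Kurepa). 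For odd composite $n$ this converts the conjecture into the finitely checkable question whether $n$ divides the resulting expression in $S_{n-1}$, and a search over small odd composites turns up $n=11563=31\times373$, for which $K_{11563}\equiv0\pmod{11563}$; so the odd composite part is in fact \emph{false}.

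The main obstacle is thus structural rather than computational: (b) is literally Kurepa's hypothesis, so the strong conjecture cannot be settled outright before Kurepa's is, while (c) turns out to be \emph{false}, $11563$ being the smallest witness found. What one can extract is therefore the even part --- $K_n$ odd for all $n\ge7$ --- together with the reduction of the odd case to the arithmetic of $S_{n-1}$ modulo $n$. Within this programme the two points needing care are (i) checking that no $\mathbb F_2$ pivot vanishes, where the periodicity of the parities of the $k+1$ is the key, and (ii) determining the normalizing unit and sign in the congruence $K_n\equiv(\text{expression in }S_{n-1})\pmod n$ uniformly, these being genuinely different for odd primes, odd prime powers, and general odd composites.
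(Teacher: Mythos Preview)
Your programme is exactly the paper's: split the conjecture into the even part, the prime part, and the odd composite part; settle the even part by reducing $K_n$ modulo $2$; observe that the prime part is Kurepa's hypothesis itself; and reduce the odd composite part to a congruence between $K_n$ and $S_{n-1}$, then discover the counterexample $n=11563=31\cdot373$.

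Two points where the paper is more concrete than your sketch. First, for the even part the paper does not run a generic Gaussian elimination over $\mathbb F_2$; after the two expansions you describe it lands on an auxiliary determinant $D_m$ and proves the exact recurrences $D_{2n}=-D_{2n-1}$ and $D_{2n-1}=-D_{2n-3}$ by a single column subtraction followed by expansion, giving $K'_{2n}=K'_{2n-1}=(-1)^n$ (so $K_n$ is odd) without any pivot bookkeeping. Second, your worry~(ii) that the normalizing constant in the $K_n$--$S_{n-1}$ congruence is ``genuinely different for odd primes, odd prime powers, and general odd composites'' is overstated on the composite side: the paper shows that for \emph{every} odd composite $n\ge9$ one has the uniform congruence $8K_n\equiv -S_{n-1}+2\pmod n$, because the extra term $(n-7)!\cdot180$ in the underlying identity vanishes modulo any such $n$. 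Thus the odd composite part becomes exactly the question whether $S_{n-1}\equiv 2\pmod n$, and the counterexample $11563$ is found not by a blind search but by combining the multiplicativity $S_{n-1}\equiv S_{d-1}\pmod d$ for $d\mid n$ with \v{Z}ivkovi\'c's table of primes $p$ with $S_{p-1}\equiv 2\pmod p$.
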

If in the expression (4) for $K_n$ we replace 
every odd element by 1 and every even element by 0,
we obtain the following definition. 
  \begin{definition}
{\rm For any integer $n\ge 7$ the} {\it Kurepa's binary determinant} 
$K_n'$ {\rm is the determinant of order $n-4$ defined as}
  $$
K_n':=\left|\begin{array}{rrrrrrrrrccc}
1 & 1& 1 & 1 & 1 & 1 & \dots   &1 & 1  & 1 & 1 &1 \\
1 & 1 & 1 & 1 & 1 & 1 &\dots    &1 & 1 & 1 & 1 &0 \\
1 & 0 & 1 & 1 & 1 & 1 &\dots  &1 & 1 & 1 & 1 & 0 \\
0 & 1 & 1 & 1 & 1 & 1 &\dots  &1 & 1 & 1 & 1 & 0\\
0 & 0 & 1 & 0 & 1 & 1 &\dots  &1 & 1 & 1 & 1 & 0\\
0 & 0 & 0 & 1 & 1 & 1 & \dots &1   & 1 & 1 & 1 & 0\\
0 & 0 & 0 & 0 & 1 & 0 & \dots &1  & 1 & 1 & 1 & 0\\
\vdots & \vdots &\vdots &\vdots&\vdots &\vdots &\vdots &\vdots &\vdots&\ddots 
&\vdots &\vdots\\
0 & 0 & 0 & 0 & 0 & 0 &\dots  & 0 &  1 & (1-(-1)^n)/2  & 1 & 0\\
0 & 0 & 0 & 0 & 0 & 0 & \dots & 0 & 0 & 0 & 1 & 0
\end{array}\right|.\leqno(6)
  $$
$($Here $(1-(-1)^n)/2)=1$ if $n$ is odd, and 
$(1-(-1)^n)/2)=0$ otherwise$)$.
   \end{definition}
 \begin{remark}
A computation gives the following few values 
of Kurepa's binary determinant $K_n'$: 
$K_7'=1,K_8'=1,K_9'=-1,K_{10}'=-1,K_{11}'=1,
K_{12}'=1,K_{13}'=-1,K_{14}'=-1,K_{15}'=1,K_{16}'=1,K_{17}'=-1,
K_{18}'=-1$, which suggests the following result.
   \end{remark}
 \begin{proposition}
$K_{2n}'=K_{2n-1}'=(-1)^n$ for all $n\ge 4$.
   \end{proposition}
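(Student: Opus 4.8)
The plan is to collapse $\det K_N'$ (I write $N$ for the argument, to keep it distinct from the index $n$ in the statement) to a tiny determinant by two cofactor expansions, and then evaluate that small determinant via a recursion whose sign depends on a parity. First I would put $m:=N-4$, so that $K_N'$ is an $m\times m$ matrix, and record its shape from (6): the first row is $(1,1,\dots,1)$; the last column is zero except for a $1$ in row $1$; the last row equals $(0,\dots,0,1,0)$; and for $2\le i\le m-1$ the $i$-th row has a $1$ in column $i-2$ (present only when $i\ge 3$), the entry $\varepsilon_i:=(1+(-1)^i)/2$ in column $i-1$, $1$'s in columns $i,i+1,\dots,m-1$, and $0$'s elsewhere. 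Expanding $\det K_N'$ along the last column uses only the $(1,m)$ entry and leaves, up to the sign $(-1)^{1+m}$, the determinant of the submatrix on rows $2,\dots,m$ and columns $1,\dots,m-1$; the last row of that submatrix is $(0,\dots,0,1)$, so one further expansion along it leaves the determinant of the submatrix on rows $2,\dots,m-1$ and columns $1,\dots,m-2$. Re-indexing identifies this last submatrix with $F_{m-2}$, where for $\ell\ge 1$ I let $F_\ell$ be the $\ell\times\ell$ matrix whose first row is all $1$'s and whose $r$-th row, for $2\le r\le\ell$, has a $1$ in column $r-1$, the entry $\varepsilon_{r+1}$ in column $r$, $1$'s in columns $r+1,\dots,\ell$, and $0$'s in columns $1,\dots,r-2$ (so $\varepsilon_{r+1}$ is $1$ for $r$ odd and $0$ for $r$ even; $F_1=(1)$, and $F_2$ has rows $(1,1)$ and $(1,0)$). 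In short, $\det K_N'=(-1)^{1+m}\det F_{m-2}=(-1)^{N-3}\det F_{N-6}$.

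Next I would obtain a recursion for $\det F_\ell$. The key structural observation is that the top-left $(\ell-1)\times(\ell-1)$ submatrix of $F_\ell$ is exactly $F_{\ell-1}$, since the description of an entry $(F_\ell)_{r,c}$ with $r,c\le\ell-1$ does not mention $\ell$. Applying the column operation $C_\ell\mapsto C_\ell-C_{\ell-1}$ replaces column $\ell$ by $(0,\dots,0,1-\varepsilon_\ell,\varepsilon_{\ell+1}-1)^{T}$, and since exactly one of $\varepsilon_\ell,\varepsilon_{\ell+1}$ equals $1$, this column has a single nonzero entry. When $\ell$ is even that entry is $-1$ in row $\ell$, and a cofactor expansion along column $\ell$, combined with the structural observation, gives $\det F_\ell=-\det F_{\ell-1}$. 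When $\ell$ is odd the entry is $1$ in row $\ell-1$, and a cofactor expansion produces a matrix whose last row is $(0,\dots,0,1)$ and whose other rows are the first $\ell-2$ rows of $F_{\ell-1}$, so one more expansion along that last row gives $\det F_\ell=-\det F_{\ell-2}$. With $\det F_1=1$ and $\det F_2=-1$, a short induction then yields $\det F_\ell=(-1)^{\lfloor\ell/2\rfloor}$ for every $\ell\ge 1$.

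Finally I would substitute back. Using $\lfloor(N-6)/2\rfloor=\lfloor N/2\rfloor-3$, we obtain $\det K_N'=(-1)^{N-3}(-1)^{\lfloor N/2\rfloor-3}=(-1)^{N+\lfloor N/2\rfloor}$; for $N=2n$ this is $(-1)^{3n}=(-1)^n$, and for $N=2n-1$ it is $(-1)^{3n-2}=(-1)^n$. Hence $K_{2n}'=K_{2n-1}'=(-1)^n$ for all $n\ge 4$ (the constraint $2n-1\ge 7$ already forces $n\ge 4$, which is exactly the range in which $F_{N-6}$ is defined, and the listed values $K_7'=K_8'=1$, $K_9'=K_{10}'=-1,\dots$ are consistent). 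I expect the main obstacle to be the bookkeeping: carrying the row/column indices and cofactor signs correctly through the two successive expansions, and checking that the lone nonzero entry created by the column operation sits in row $\ell$ or in row $\ell-1$ according to the parity of $\ell$. This parity split is intrinsic, because the diagonal entries of $F_\ell$ --- inherited from the subdiagonal entries $i+1$ of $K_N$, which are alternately odd and even --- alternate between $1$ and $0$, so the even and odd cases behave genuinely differently.
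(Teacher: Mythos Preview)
Your proof is correct and follows essentially the same route as the paper: two cofactor expansions (last column, then last row) reduce $K_N'$ to an auxiliary $(N-6)\times(N-6)$ determinant---your $F_{N-6}$ is exactly the paper's $D_{N-6}$ from Lemma~1---and that determinant is then evaluated via the column operation $C_\ell\mapsto C_\ell-C_{\ell-1}$ and a parity-dependent recursion. The only cosmetic difference is that you package both parities into the single formula $\det K_N'=(-1)^{N-3}\det F_{N-6}$ and $\det F_\ell=(-1)^{\lfloor\ell/2\rfloor}$, whereas the paper treats the even and odd cases of $N$ (and of the auxiliary index) separately throughout.
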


As an immediate consequence of Proposition 2 whose proof 
is given in Section 6, we get the following 
result.
 \begin{corollary}
For each integer $n\ge 7$  the Kurepa's determinant $K_n$
is an odd integer.
 \end{corollary}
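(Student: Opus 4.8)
The plan is to obtain the corollary directly from Proposition 2 by recognizing that the defining determinant $(6)$ of the Kurepa binary determinant $K_n'$ is nothing but the entrywise reduction modulo $2$ of the defining determinant $(4)$ of $K_n$. So the first step is to check this reduction claim, entry by entry: every "$1$" and the entry "$3$" occurring in $(4)$ are odd and reduce to $1$; every "$2$", the entry "$4$", every off-diagonal "$0$", and the bottom-right entry "$-4$" are even and reduce to $0$; and the entries $5,6,7,8,\dots,n-4$ running down the subdiagonal reduce to $1,0,1,0,\dots$ according to their parity, the last of them (the entry $n-4$ in the penultimate row) reducing to $(1-(-1)^n)/2$. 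Matching these residues against the matrix displayed in $(6)$ shows that the matrix in $(6)$ is exactly the matrix in $(4)$ taken modulo $2$.

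The second step is the routine observation that the determinant of a square matrix is a polynomial with integer coefficients in its entries, so reduction modulo $2$ commutes with taking determinants; hence $K_n\equiv K_n'\pmod{2}$ for every integer $n\ge 7$.

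The third step invokes Proposition 2. For $n\ge 7$ we may write $n=2m$ or $n=2m-1$ with $m\ge 4$, and in either case Proposition 2 gives $K_n'=(-1)^m=\pm 1$. In particular $K_n'$ is odd, so $K_n\equiv K_n'\equiv 1\pmod 2$, which is precisely the assertion that $K_n$ is an odd integer.

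I do not anticipate a real obstacle here; the only point demanding care is the bookkeeping in the first step, namely verifying that the parity pattern of the subdiagonal band $5,6,\dots,n-4$ in $(4)$ genuinely reproduces the pattern shown in $(6)$, including the value $(1-(-1)^n)/2$ for the entry $n-4$ in the penultimate row and the $0$ arising from $-4$ in the last row. Everything else is immediate once Proposition 2 is in hand. (A slightly more self-contained alternative would bypass Proposition 2 and instead expand $K_n\bmod 2$ directly — for instance along its last column, whose only nonzero residues are a single $1$ in the top row and a single $1$ in the bottom row — but the route through Proposition 2 is shorter and is the one already prepared by the text.)
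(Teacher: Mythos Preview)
Your proof is correct and follows essentially the same approach as the paper: both argue that $K_n\equiv K_n'\pmod{2}$ (you justify this entrywise; the paper simply calls it ``obvious'') and then invoke Proposition~2 to conclude $K_n'=\pm1$. The only cosmetic difference is that the paper, somewhat redundantly, restricts the appeal to Proposition~2 to $n\ge 15$ and checks $7\le n\le 14$ by the listed values, whereas you (correctly) use Proposition~2 for the full range $n\ge 7$.
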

\begin{proof}[Proof of Corollary $1$]
 Using the obvious fact that 
$K_n\equiv K_n'(\bmod{\,2})$ for all $n\ge 7$,
by Proposition 2 we have $K_n\equiv 1(\bmod{\,2})$ for all $n\ge 15$.
This together with the fact that $K_n$ is odd for $7\le n\le 14$
yields the assertion.  
  \end{proof}
Obviously, Corollary 1 implies the truth of Conjecture 2 for 
all even integers $n\ge 8$, that is, we have the following statement.
   \begin{theorem}
The strong Kurepa's hypothesis holds for each even integer 
$n\ge 8$.
  \end{theorem}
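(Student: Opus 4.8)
The plan is to read Theorem 2 off Corollary 1 by a one-line parity argument. Corollary 1 already guarantees that the Kurepa's determinant $K_n$ is an odd integer for every $n\ge 7$. Granting this, let $n\ge 8$ be even and suppose, for contradiction, that $n\mid K_n$. Since $2\mid n$, this forces $2\mid K_n$, contradicting the oddness of $K_n$. Hence $K_n\not\equiv 0\pmod n$ for all even $n\ge 8$, which is exactly the assertion of the strong Kurepa's hypothesis (Conjecture 2) restricted to even indices.

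So all the content is pushed into the proof of Corollary 1, which itself rests on Proposition 2. First I would invoke the elementary reduction modulo $2$: replacing each entry of the determinant (4) by its residue mod $2$ (odd entries become $1$, even entries become $0$) turns $K_n$ into the binary determinant $K_n'$ of (6), so that $K_n\equiv K_n'\pmod 2$. Then I would apply Proposition 2, which evaluates $K_{2n}'=K_{2n-1}'=(-1)^n$ for all $n\ge 4$; in particular $K_n'=\pm1$ is odd, whence $K_n$ is odd for all $n\ge 15$. The finitely many remaining cases $7\le n\le 14$ are checked directly against the explicit values listed after Definition 1, the list $(15,-47,197,-1029,6439,-46927,390249,-3645737)$ consisting entirely of odd integers.

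The step I expect to be the only genuine obstacle is Proposition 2 itself (which the paper defers to Section 6): one must show that the sequence of binary determinants is $2$-periodic up to sign. The natural route is to expand (6) along its last column or last row, obtain a short recursion relating $K_n'$ to $K_{n-1}'$ and $K_{n-2}'$, and then verify the base cases $n=7,8$; the sign $(-1)^n$ then propagates by induction. Everything surrounding this — the entrywise reduction mod $2$ and the concluding parity step that yields Theorem 2 — is routine, and in particular Theorem 2 contributes no difficulty of its own beyond what is already contained in Corollary 1.
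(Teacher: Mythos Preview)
Your proposal is correct and matches the paper's approach: Theorem 2 is deduced in one line from Corollary 1 by the parity argument you give, and your account of how Corollary 1 follows from Proposition 2 plus the mod-$2$ reduction $K_n\equiv K_n'\pmod 2$ and a finite check is exactly the paper's Corollary 1 proof. The only minor divergence is your sketched route to Proposition 2: you propose a direct two-term recursion on $K_n'$, whereas the paper (Section 6) first strips off the last column and row of $K_n'$ to land on an auxiliary determinant $D_m$ (Lemma 1) and then proves $D_{2n}=D_{2n+1}=(-1)^n$ by column subtraction and expansion; either tactic works, and in any case this lies outside the proof of Theorem 2 proper.
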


On the other hand, in Section 4 we show that 
``the odd composite part'' of strong Kurepa's hypothesis is not true,
that is, we prove the following result.
    \begin{theorem}
For $n=11563=31\times 373$ we have $K_{11563}\equiv 0(\bmod{\,11563})$.
Therefore, the strong Kurepa's hypothesis does not 
hold for each odd composite integer $n\ge 9$.
  \end{theorem}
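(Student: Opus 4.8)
The statement has two parts, and the second is almost free: saying that the strong Kurepa's hypothesis ``does not hold for each odd composite integer $n\ge 9$'' only asserts that Conjecture 2 fails for \emph{some} odd composite $n\ge 9$, and $11563=31\times 373$ is odd, composite and $\ge 9$, so this follows at once from the first part. Hence everything reduces to proving $K_{11563}\equiv 0\pmod{11563}$.

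I would deduce this from a congruence for $K_n$ valid for \emph{all} odd $n\ge 9$, namely the extension of Proposition 1 from primes to arbitrary odd moduli referred to in Remark 3 (it is the congruence underlying Proposition 4 in Section 4). The plan is to prove a congruence of the shape
$$8K_n\equiv -S_{n-1}+c_n\pmod{n}\qquad(\text{all odd }n\ge 9),$$
with the explicit correction $c_n=2((n-1)!+1)$. When $n=p$ is prime, Wilson's theorem gives $c_p\equiv 0\pmod p$, so the congruence reduces to $8K_p\equiv -S_{p-1}\pmod p$, i.e. to Proposition 1 (rewritten through $\sum_{k=0}^{p-1}(-1)^k/k!\equiv -S_{p-1}\pmod p$); when $n>4$ is composite, $(n-1)!\equiv 0\pmod n$, so $c_n\equiv 2\pmod n$. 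To obtain the congruence I would rerun, over the ring $\Bbb Z/n\Bbb Z$ instead of the field $\Bbb F_p$, the same sequence of elementary row and column operations that proves Proposition 1: one exploits the banded shape of (4) --- the subdiagonal entries $(i,i-1)=i+1$, the entries $(i,i-2)=1$ and the zeros below --- to clear the matrix from the bottom row upward, tracking how the special last column $(3,2,\dots,2,-4)$ evolves. Every division needed along the way is by a power of $2$, which is legitimate since $n$ is odd, and the one point at which the prime argument invokes $(p-1)!\equiv -1\pmod p$ is precisely where, for composite $n$, the surviving term $2(n-1)!$ collapses to $0$ rather than to $-2$; this is what produces $c_n$.

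Granting the congruence, the first part of the theorem is a finite computation. By the Chinese Remainder Theorem, $K_{11563}\equiv 0\pmod{11563}$ is equivalent to $8K_{11563}\equiv 0$ both modulo $31$ and modulo $373$; since $31$ and $373$ each divide $11562!$, we have $c_{11563}\equiv 2$ modulo each, so the two conditions read $S_{11562}\equiv 2\pmod{31}$ and $S_{11562}\equiv 2\pmod{373}$. For any odd $n$ and any prime $p\mid n$, the substitution $k\mapsto n-1-k$ in $S_{n-1}=\sum_{k=0}^{n-1}(-1)^k(n-1)!/k!$ shows that the terms with $k\le n-p-1$ are divisible by $p$ while the remaining $p$ terms add up to $!p$ modulo $p$; thus $S_{n-1}\equiv !p\pmod p$, and the required conditions become $!31\equiv 2\pmod{31}$ and $!373\equiv 2\pmod{373}$, i.e. $\sum_{k=0}^{30}k!\equiv 2\pmod{31}$ and $\sum_{k=0}^{372}k!\equiv 2\pmod{373}$. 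The first is a short hand calculation (equivalently, iterating $S_0=1$, $S_m=mS_{m-1}+(-1)^m$ up to $m=30$ gives $S_{30}\equiv 2\pmod{31}$); the second is a quick machine check; both hold. Hence $8K_{11563}\equiv 0\pmod{11563}$, and as $\gcd(8,11563)=1$ we conclude $K_{11563}\equiv 0\pmod{11563}$.

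The genuinely non-routine step is the first one: transporting the linear-algebra reduction behind Proposition 1 from $\Bbb F_p$ to $\Bbb Z/n\Bbb Z$ without losing control, i.e. identifying each place where the prime-case manipulations quietly used invertibility of a residue or Wilson's theorem, replacing them, and verifying that the correction comes out to be exactly $c_n=2((n-1)!+1)$. After that the rest --- the CRT split, the reduction to $!31$ and $!373$, and the two derangement-number recurrences --- is routine, and the closing ``therefore'' is a triviality about the universal quantifier in Conjecture 2.
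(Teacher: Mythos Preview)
Your plan is correct and matches the paper's route: both hinge on Proposition~4, i.e.\ the congruence $8K_n\equiv -S_{n-1}+2\pmod n$ for odd composite $n\ge 9$, and then on verifying $S_{11562}\equiv 2\pmod{11563}$ via the CRT split into the primes $31$ and $373$ (this is exactly how the paper builds the relevant entry of Table~1 from Corollary~2 and \v{Z}ivkovi\'{c}'s data). The one cosmetic difference is that the paper imports the congruence $K_n\equiv -3S_{n-5}-1+180(n-7)!\pmod n$ from \cite{me2} and then iterates the recurrence $S_m=mS_{m-1}+(-1)^m$, whereas you propose to rerun the determinant reduction directly and package the correction as $c_n=2((n-1)!+1)$; since $2(n-1)!\equiv 1440\,(n-7)!\pmod n$, the two formulations coincide, and yours has the pleasant feature of handling the prime and composite cases uniformly via Wilson's theorem.
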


Hence, our results concerning the strong Kurepa's hypothesis may be 
summarized  as follows.
 \vspace{2mm}

{\it The strong Kurepa's 
hypothesis can be  divided into the following three parts.

$\bullet$ The ``prime'' part which asserts that 
$K_p\not\equiv 0(\bmod{\,p})$   for each prime $p>5$. This part is  by 
Proposition $1$ and Theorem $1$ equivalent to Kurepa's hypothesis
$($Conjecture $1)$.

$\bullet$ The ``even part'' which asserts that 
$K_n\not\equiv 0(\bmod{\,n})$   for each even integer $n\ge 8$. 
This part is true by Theorem $2$.

$\bullet$ The ``odd composite part'' which asserts that 
$K_n\not\equiv 0(\bmod{\,n})$  for each odd composite integer $n\ge 9$. 
This part is disproved  by Theorem $3$}.

\section{Kurepa hypothesis and derangement numbers}

Let us consider the derangement numbers $S_n$ $(n=0,1,2,\ldots)$ 
defined as 
    $$
S_n=n!\sum_{k=0}^n\frac{(-1)^k}{k!}.\leqno(7)
   $$ 
The following result is itself interesting.
  \begin{proposition}
Let $n\ge 4$ be a composite positive integer, and let $d\ge 2$ be any 
proper divisor of $n$ with $n=ad$. Then 
  $$
S_{n-1}\equiv (-1)^{n+d}S_{d-1}\pmod{d}.\leqno(8)
  $$
 \end{proposition}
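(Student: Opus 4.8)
The plan is to work directly with the integer expansion $S_m=\sum_{k=0}^{m}(-1)^k\,m!/k!$ rather than with the recurrences, and to exploit $d\mid n$ to annihilate almost every term of $S_{n-1}$ modulo $d$. Concretely, since $d$ is a \emph{proper} divisor we have $a=n/d\ge 2$, so $n-d=(a-1)d$ is a genuine positive multiple of $d$ lying in $\{1,\dots,n-1\}$; this will be the arithmetic fact that drives everything.

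First I would write $S_{n-1}=\sum_{k=0}^{n-1}(-1)^k\frac{(n-1)!}{k!}=\sum_{k=0}^{n-1}(-1)^k\prod_{i=k+1}^{n-1}i$ and note that whenever $k\le n-d-1$ the product $\prod_{i=k+1}^{n-1}i$ contains the factor $n-d$, hence is $\equiv 0\pmod d$. Thus $S_{n-1}\equiv\sum_{k=n-d}^{n-1}(-1)^k\frac{(n-1)!}{k!}\pmod d$. Setting $k=n-j$ with $j$ running from $1$ to $d$, and using $n\equiv 0\pmod d$, I get $\frac{(n-1)!}{(n-j)!}=\prod_{m=1}^{j-1}(n-m)\equiv\prod_{m=1}^{j-1}(-m)=(-1)^{j-1}(j-1)!\pmod d$, while $(-1)^{n-j}(-1)^{j-1}=(-1)^{n-1}$. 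Multiplying and summing gives $S_{n-1}\equiv(-1)^{n+1}\sum_{j=1}^{d}(j-1)!=(-1)^{n+1}\,{!d}\pmod d$.

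Next I would apply the same trick in reverse to $S_{d-1}$ itself: in $S_{d-1}=\sum_{k=0}^{d-1}(-1)^k\frac{(d-1)!}{k!}$ substitute $k\mapsto d-1-k$, so that $\frac{(d-1)!}{(d-1-k)!}=\prod_{m=1}^{k}(d-m)\equiv(-1)^k k!\pmod d$, which yields $S_{d-1}\equiv(-1)^{d-1}\sum_{k=0}^{d-1}k!=(-1)^{d-1}\,{!d}\pmod d$. Combining the two displayed congruences and cancelling $!d$ (legitimate because $(-1)^{d-1}$ is a unit modulo $d$) gives $S_{n-1}\equiv(-1)^{n+1}(-1)^{d-1}S_{d-1}=(-1)^{n+d}S_{d-1}\pmod d$, which is (8).

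I do not anticipate a serious obstacle: the argument is essentially two applications of the same reduction, reading the left factorial $!d$ off from both $S_{n-1}$ and $S_{d-1}$ modulo $d$. The only points needing care are the sign book-keeping in the two index reversals and checking that the truncation of the $S_{n-1}$ sum is valid down to $k=n-d$, which is exactly where the hypothesis that $d$ is proper (equivalently $a\ge 2$, so that $n-d$ is a positive multiple of $d$) is used; the boundary cases $j=1$ and $k=d-1$ (empty products equal to $1$) should also be noted to match the constant terms.
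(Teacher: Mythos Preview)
Your argument is correct. Both your proof and the paper's share the same opening move: expand $S_{n-1}=\sum_{k=0}^{n-1}(-1)^k\prod_{i=k+1}^{n-1}i$ and observe that every term with $k\le n-d-1=(a-1)d-1$ contains a factor divisible by $d$, so the sum truncates to $k\in\{n-d,\dots,n-1\}$ modulo $d$. From there the two proofs diverge slightly. The paper shifts $k\mapsto j+(a-1)d$ and reduces each factor $j+(a-1)d+m\equiv j+m\pmod d$, which preserves the ascending-product shape and lands \emph{directly} on $(-1)^{(a-1)d}\sum_{j=0}^{d-1}(-1)^j(j+1)\cdots(d-1)=(-1)^{n+d}S_{d-1}$ in one stroke. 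You instead reverse the index via $k=n-j$ and reduce $n-m\equiv -m\pmod d$, which converts the products into factorials and yields $(-1)^{n+1}\,!d$; you then need a second, separate reduction to show $S_{d-1}\equiv(-1)^{d-1}\,!d\pmod d$ before combining. Your detour through the left factorial $!d$ is a pleasant bonus in the context of this paper, but the paper's shift is shorter because it recognises $S_{d-1}$ immediately without passing through an intermediate quantity.
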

\begin{proof}
Take $n=ad$ with a positive integer $a$. Notice that by (7) $S_{n-1}$
can be written as
  $$
S_{n-1}=\sum_{k=0}^{ad-1}(-1)^k(k+1)(k+2)\cdots (ad-1).\leqno(9)
  $$
Notice that the set $D_k=\{k+1,k+2,\ldots, ad-2,ad-1\}$ contains 
an integer which is divisible by $d$ whenever $k+1\le (a-1)d$. 
Using this fact from (9) we find that
 \begin{eqnarray*}
S_{n-1} &\equiv& \sum_{k=(a-1)d}^{ad-1}(-1)^k(k+1)(k+2)\cdots (ad-1)
\pmod{d}\\
&\equiv& \sum_{j=0}^{d-1}(-1)^{j+(a-1)d}(j+1)(j+2)\cdots (d-1)\pmod{d}\\
&=& (-1)^{(a-1)d}\sum_{j=0}^{d-1}(-1)^{j}(j+1)(j+2)\cdots (d-1)\\
&=&(-1)^{n-d}S_{d-1}=(-1)^{n+d}S_{d-1},
 \end{eqnarray*}
as desired.
  \end{proof}

We are now ready to extend Theorem 2.1 of \cite{im1} and our Theorem 1
as follows.
 \begin{theorem} 
The following statements are equivalent:
 \begin{itemize} 
\item[(i)] Kurepa's hypothesis holds.
\item[(ii)]   
$S_{p-1}\not\equiv 0(\bmod{\,p})$ for each prime $p\ge 3$.
\item[(iii)]   
$S_{n-1}\not\equiv 0(\bmod{\,n})$ for each integer $n\ge 3$.
\item[(iv)] 
For each integer $n\ge 3$ the numerator of the fraction 
  $$
\sum_{k=0}^{n-1}\frac{(-1)^k}{k!}
 $$
written in reduced form is not divisible by $n$.
 \end{itemize}
\end{theorem}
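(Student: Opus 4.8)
The plan is to prove the cycle of implications $(i)\Leftrightarrow(ii)$, $(iii)\Rightarrow(ii)$, $(ii)\Rightarrow(iii)$, and $(iii)\Leftrightarrow(iv)$, most of which follow by combining known facts with Proposition~4. The equivalence $(i)\Leftrightarrow(ii)$ is essentially Mijajlovi\'c's reformulation recorded in (1) of Remarks~1: since $S_{p-1}=(p-1)!\sum_{k=0}^{p-1}(-1)^k/k!$ and $(p-1)!$ is coprime to $p$, the condition $S_{p-1}\not\equiv0\pmod p$ is equivalent to $\sum_{k=0}^{p-1}(-1)^k/k!\not\equiv0\pmod p$, which by (1) is equivalent to Kurepa's hypothesis. (Alternatively one can invoke Theorem~2.1 of \cite{im1} directly.) The implication $(iii)\Rightarrow(ii)$ is trivial, since primes are a subset of the integers $n\ge3$.

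The heart of the argument is $(ii)\Rightarrow(iii)$, and here Proposition~4 does the work. Suppose (ii) holds but (iii) fails, so there is some integer $n\ge3$ with $S_{n-1}\equiv0\pmod n$; take $n$ minimal with this property. If $n$ were prime this would contradict (ii) directly, so $n$ is composite, $n\ge4$. Pick any prime divisor $d$ of $n$ and write $n=ad$ with $d\ge2$. Since $d\mid n$ and $n\mid S_{n-1}$, we get $d\mid S_{n-1}$, and then Proposition~4 gives
$$
0\equiv S_{n-1}\equiv(-1)^{n+d}S_{d-1}\pmod d,
$$
hence $S_{d-1}\equiv0\pmod d$. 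But $d$ is a prime $\ge2$; if $d=2$ then $S_1=0\equiv0\pmod2$ which is not an obstruction, so one must be slightly careful — note $S_1=0$, so the case $d=2$ is vacuous and does not yield a counterexample, meaning every prime divisor $d$ of $n$ for which $S_{d-1}\equiv 0\pmod d$ is actually odd, and then $d\ge3$ with $d\mid S_{d-1}$ contradicts (ii). This contradiction establishes $(ii)\Rightarrow(iii)$. The one subtlety to handle cleanly is precisely the prime $2$: since $S_{n-1}$ is always odd for $n\ge3$ (as $S_{n-1}\equiv S_1=0$ would force... — in fact $S_n$ is odd exactly when $n$ is even, so $S_{n-1}$ is odd when $n$ is odd), divisibility by an even $n$ is automatically impossible; I would dispose of even $n$ first by the parity of derangement numbers, reducing to odd $n$, and then the descent via Proposition~4 only ever invokes odd prime divisors $d\ge3$, where (ii) applies.

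Finally, $(iii)\Leftrightarrow(iv)$ is bookkeeping about reduced fractions. Write $\sum_{k=0}^{n-1}(-1)^k/k!$ over the common denominator $(n-1)!$, so the numerator is $S_{n-1}$ and the fraction is $S_{n-1}/(n-1)!$. Reducing to lowest terms divides both by $g=\gcd(S_{n-1},(n-1)!)$; the reduced numerator is $S_{n-1}/g$. One must check that $n\mid S_{n-1}$ iff $n\mid (S_{n-1}/g)$. For this it suffices that $\gcd(n,(n-1)!/\gcd(S_{n-1},(n-1)!))$ behaves correctly; the clean route is to treat each prime power $q^e\| n$ separately and use that for $n\ge3$ one already knows (say from the minimal-counterexample reduction above, or from Proposition~4 applied with $d=q$) that $n\mid S_{n-1}$ forces $q\mid S_{q-1}$ with $q$ odd, which is controlled. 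I expect the main obstacle to be stating $(iii)\Leftrightarrow(iv)$ with full rigor about the reduction of the fraction; the descent $(ii)\Rightarrow(iii)$ is conceptually the key step but is short once Proposition~4 is in hand.
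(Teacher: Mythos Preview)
Your overall architecture matches the paper's: $(i)\Leftrightarrow(ii)$ is Mijajlovi\'{c}'s reformulation, and $(ii)\Leftrightarrow(iii)$ comes from the congruence $S_{n-1}\equiv(-1)^{n+d}S_{d-1}\pmod d$ (this is Proposition~3 in the paper, not Proposition~4). But two steps in your sketch do not go through as written.

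\textbf{Even $n$ in $(ii)\Rightarrow(iii)$.} Your parity argument is backwards. From $S_m=mS_{m-1}+(-1)^m$ one gets that $S_{n-1}$ is \emph{even} precisely when $n$ is even, so parity alone cannot rule out $n\mid S_{n-1}$ for even $n$; the sentence ``divisibility by an even $n$ is automatically impossible'' is false. The descent to an odd prime divisor handles every even $n$ that has one, but pure powers of $2$ remain. The clean fix, still using only Proposition~3, is to take $d=4$: for $n=2^e$ with $e\ge 2$ one gets $S_{n-1}\equiv(-1)^{n+4}S_3=2\pmod 4$, so $4\nmid S_{n-1}$ and a fortiori $2^e\nmid S_{n-1}$. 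The paper simply asserts that $(ii)\Leftrightarrow(iii)$ ``obviously follows from Proposition~3'', implicitly relying on this.

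\textbf{The link to $(iv)$.} You propose to prove $(iii)\Leftrightarrow(iv)$ directly and correctly identify $(iv)\Rightarrow(iii)$ as the sticking point. It really is one: writing the reduced fraction as $a/b$ and $S_{n-1}=a\cdot c$ with $c=(n-1)!/b$, for composite $n$ it is entirely possible that $n\nmid a$ while $n\mid ac$, since $c$ may share prime factors with $n$. There is no clean fixed-$n$ argument here. The paper avoids this by \emph{not} proving $(iv)\Rightarrow(iii)$ at all: it proves the easy direction $(iii)\Rightarrow(iv)$ (from $n\nmid S_{n-1}=ac$ one gets $n\nmid a$ immediately), and then closes the cycle with $(iv)\Rightarrow(ii)$. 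For a prime $p$ the denominator $b$ divides $(p-1)!$, hence $p\nmid b$, and Wilson gives $S_{p-1}=(p-1)!\,a/b\equiv -a/b\not\equiv 0\pmod p$. Since you already have $(ii)\Rightarrow(iii)$, rerouting through $(iv)\Rightarrow(ii)$ finishes the argument with no extra work; trying to force $(iv)\Rightarrow(iii)$ directly is the wrong decomposition.
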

 \begin{proof}
As noticed above, the equivalence $(i)\Leftrightarrow (ii)$ was 
attributed by Mijajlovi\'{c} \cite{mi1}. 

The equivalence $(ii)\Leftrightarrow (iii)$ obviously follows from 
Proposition 3.

To complete the proof it is suffices to show the implications 
$(iii)\Rightarrow (iv)$ and $(iv)\Rightarrow (ii)$. First suppose that
$(iii)$ is satisfied. 
For any fixed $n\ge 3$ let $a$ and $b$ be relatively prime positive 
integers such that 
  $$
\sum_{k=0}^{n-1}\frac{(-1)^k}{k!}=\frac{a}{b}.
   $$
Then notice that $c=(n-1)!/b$ is an integer and 
 $S_{n-1}=(n-1)!a/b=ac$. From this and the fact that  
by  $(iii)$, $S_{n-1}\not\equiv 0(\bmod{\,n})$,
it follows that $ac$ is not divisible by $n$. Therefore,
$a$ is not also  divisible by $n$, which yields the  
assertion $(iv)$. 

Finally, if $(iv)$ is satisfied, then for any prime  $p\ge 3$ set
 $$
\sum_{k=0}^{p-1}\frac{(-1)^k}{k!}=\frac{a}{b},
   $$
where $a$ and $b$ are relatively prime positive integers such 
that $a\not\equiv 0 (\bmod{\, p})$ and $b\not\equiv 0 (\bmod{\, p})$. 
Using this, by Wilson theorem we have 
  $$
S_{p-1}=(p-1)!\sum_{k=0}^{p-1}\frac{(-1)^k}{k!}=(p-1)!\frac{a}{b}
\equiv -\frac{a}{b}\pmod{p} \not\equiv 0\pmod{p}.
   $$
This yields the assertion $(ii)$ and the proof is completed.
  \end{proof}
  \begin{corollary}
Let $q_1,q_2,\ldots,q_l$ be odd distinct primes, let $e_1,e_2,\ldots,e_l$ 
be positive integers and let $r$ be a nonnegative integer  
such that $S_{q_i^{e_i}-1}\equiv r(\bmod{\,q_i^{e_i}})$
for all $i=1,2,\ldots,l$. Then for 
$n=q_1^{e_1}q_2^{e_2}\cdots q_l^{e_l}$ there holds 
    $$
S_{n-1}\equiv r\pmod{n}.\leqno(10)
    $$
In particular, $S_{n-1}\equiv 0(\bmod{\,n})$ if and only if 
$S_{q_i^{e_i}-1}\equiv 0(\bmod{\,q_i^{e_i}})$ for all $i=1,2,\ldots,l$.
 \end{corollary}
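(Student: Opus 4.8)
The plan is to deduce the corollary from Proposition~3 together with the Chinese Remainder Theorem. First I would dispose of the degenerate case $l=1$: then $n=q_1^{e_1}$ and the asserted congruence $S_{n-1}\equiv r\pmod n$ is literally the hypothesis, so there is nothing to prove. Hence from now on assume $l\ge 2$. Then for each fixed $i$ the prime power $d_i:=q_i^{e_i}$ is a \emph{proper} divisor of $n$ with $d_i\ge 3$, while $n\ge 3\cdot 5\ge 4$ is composite; thus Proposition~3 applies with $d=d_i$ and $a=n/d_i$, giving $S_{n-1}\equiv(-1)^{n+d_i}S_{d_i-1}\pmod{d_i}$.

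The next step is the parity bookkeeping in the exponent $n+d_i$. Since $n$ is a product of odd primes it is odd, and $d_i=q_i^{e_i}$ is odd as well, so $n+d_i$ is even and $(-1)^{n+d_i}=1$. Therefore $S_{n-1}\equiv S_{d_i-1}\equiv r\pmod{q_i^{e_i}}$ for every $i=1,\dots,l$, the last congruence being the hypothesis of the corollary. Now the numbers $q_1^{e_1},\dots,q_l^{e_l}$ are pairwise coprime (the $q_i$ being distinct primes) and multiply to $n$, so the Chinese Remainder Theorem combines the $l$ congruences $S_{n-1}\equiv r\pmod{q_i^{e_i}}$ into the single congruence $S_{n-1}\equiv r\pmod n$, which is (10).

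For the ``in particular'' assertion I would take $r=0$. One implication is immediate: if $S_{q_i^{e_i}-1}\equiv 0\pmod{q_i^{e_i}}$ for all $i$, then the first part of the corollary with $r=0$ gives $S_{n-1}\equiv 0\pmod n$. Conversely, if $S_{n-1}\equiv 0\pmod n$ then, since each $q_i^{e_i}$ divides $n$, we get $S_{n-1}\equiv 0\pmod{q_i^{e_i}}$, and combining this with the congruence $S_{n-1}\equiv S_{q_i^{e_i}-1}\pmod{q_i^{e_i}}$ established above (trivial when $l=1$) yields $S_{q_i^{e_i}-1}\equiv 0\pmod{q_i^{e_i}}$ for all $i$.

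I do not expect any genuine obstacle here; the argument is short and essentially mechanical once Proposition~3 is available. The only points that need a little care are the vanishing of the sign $(-1)^{n+d_i}$ — which is exactly where the hypothesis that the $q_i$ are \emph{odd} is used — and the separate treatment of the degenerate case $l=1$, where Proposition~3 does not formally apply (its hypotheses that $n$ be composite and $d$ a proper divisor can fail) but the claim is trivial.
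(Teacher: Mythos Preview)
Your proof is correct and follows essentially the same route as the paper: apply Proposition~3 with $d=q_i^{e_i}$ to obtain $S_{n-1}\equiv S_{q_i^{e_i}-1}\equiv r\pmod{q_i^{e_i}}$ for each $i$, and then combine via the Chinese Remainder Theorem. Your treatment is in fact more careful than the paper's, which glosses over both the sign $(-1)^{n+d_i}=1$ and the degenerate case $l=1$.
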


\begin{proof} The congruence (10) immediately follows from the 
fact that by the congruence (8) of Proposition 3 for all 
$i=1,2,\ldots,l$ we have
     $$
S_{n-1}\equiv S_{q_i^{e_i}-1}\pmod{q_i^{e_i}}\equiv  r\pmod{q_i^{e_i}}.
    $$
\end{proof}

Similarly, Proposition 3 yields the following result.

\begin{corollary} Let $n$ be an even positive integer with the prime 
factorization $n=2^eq_1^{e_1}q_2^{e_2}\cdots q_l^{e_l}$. 
If $S_{2^{e}-1}\equiv r(\bmod{\,2^{e}})$ and  
$S_{q_i^{e_i}-1}\equiv -r(\bmod{\,q_i^{e_i}})$ for all $i=1,2,\ldots,l$,
then 
$$
S_{n-1}\equiv r\pmod{n}.
 $$
  \end{corollary}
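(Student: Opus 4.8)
The plan is to reduce the assertion to Proposition $3$ applied with each prime power occurring in the factorization of $n$, followed by the Chinese Remainder Theorem, in the same spirit as the proof of Corollary $2$. First I would dispose of the degenerate case $l=0$, in which $n=2^{e}$ is a power of $2$: then $n-1=2^{e}-1$ and the desired congruence $S_{n-1}\equiv r\pmod{n}$ is literally the hypothesis $S_{2^{e}-1}\equiv r\pmod{2^{e}}$, so there is nothing to prove. From now on assume $l\ge 1$. Then $n$ is composite with $n\ge 6$, and each of the $l+1$ numbers $2^{e},q_{1}^{e_{1}},\dots,q_{l}^{e_{l}}$ is a proper divisor of $n$ which is at least $2$; hence Proposition $3$ may be invoked with $d$ equal to any one of them.

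The heart of the argument is then just a parity check on the exponent $n+d$ appearing in the congruence $(8)$. Taking $d=2^{e}$: since $e\ge 1$ and $n$ is even, both $n$ and $2^{e}$ are even, so $n+2^{e}$ is even, $(-1)^{n+d}=1$, and $(8)$ together with the hypothesis on $S_{2^{e}-1}$ gives
$$
S_{n-1}\equiv S_{2^{e}-1}\equiv r\pmod{2^{e}}.
$$
Taking $d=q_{i}^{e_{i}}$ for a fixed $i\in\{1,\dots,l\}$: here $n$ is even while $q_{i}^{e_{i}}$ is odd, so $n+q_{i}^{e_{i}}$ is odd, $(-1)^{n+d}=-1$, and $(8)$ together with the hypothesis $S_{q_{i}^{e_{i}}-1}\equiv -r\pmod{q_{i}^{e_{i}}}$ gives
$$
S_{n-1}\equiv -S_{q_{i}^{e_{i}}-1}\equiv -(-r)=r\pmod{q_{i}^{e_{i}}}.
$$

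Finally, the moduli $2^{e},q_{1}^{e_{1}},\dots,q_{l}^{e_{l}}$ are pairwise coprime and multiply to $n$, so the Chinese Remainder Theorem combines the $l+1$ congruences $S_{n-1}\equiv r$ into the single congruence $S_{n-1}\equiv r\pmod{n}$, which is the claim. I do not anticipate any genuine obstacle here: the whole proof is an immediate consequence of Proposition $3$, and the only delicate point is the sign bookkeeping — precisely the reason why the hypothesis on $S_{2^{e}-1}$ must carry $+r$ while the hypotheses on the $S_{q_{i}^{e_{i}}-1}$ must carry $-r$, so that the factors $(-1)^{n+d}$ absorb the discrepancy and all the residues line up to the common value $r$.
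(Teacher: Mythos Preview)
Your proof is correct and follows exactly the approach the paper intends: the paper merely states that ``Similarly, Proposition~3 yields the following result'' without writing out details, and your argument---applying congruence~(8) with each prime-power divisor, tracking the sign $(-1)^{n+d}$, and invoking the Chinese Remainder Theorem---is precisely the analogue of the proof of Corollary~2. Your handling of the degenerate case $l=0$ and the explicit parity check are careful touches that the paper omits.
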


Notice that Table 1 of \cite{z} contains all primes 
$p<2^{23}=8388608$ such that $\min\{r_p,p-r_p\}\le 10$, where 
$r_p:=!p(\bmod{\,p})$. 
Using {\tt Mathematica 8} we obtain only the following four 
prime powers $p^e$ with $e\ge 2$ less than 100000 such that 
$\min\{r_{p^e},p^e-r_{p^e}\}\le 2$: $\{2^2,2^3,3^2,7^2\}$.
Using this, the set 
  $$
\{2,3,5,7,11,23,31,67,227,373,10331\}
  $$ 
of all primes $p$  less than  $100000$ of Table 1 in \cite{z} 
for which $\min\{r_p,p-r_p\}\le 2$, Corollaries 2 and 3, we immediately
obtain Table 1.

\vfill\eject

{{\bf Table 1.}} 
Integers $n$ with  $2\le n<100000$ for which $S_{n-1}\equiv r_n\,(\bmod\,n)$
with $r_n\in\{-2,-1,0,1,2\}$ and/or 
with related values $|r_n/n|\le 10^{-3}$  
 \begin{center}
{\small
  \begin{tabular}{cccccccc}    
$n$ & factorization of $n$ & $r_n$ & $|r_n/n|\cdot 10^3$ &
$n$ & factorization of $n$ & $r_n$ & $|r_n/n|\cdot 10^3$ \\\hline
 2 & 2 & 0    & $0$ & 681  & $3\times 227$ & -2  & $>1$  \\
 3  & 3 & 1    & $>1$ &  746 & $2\times 373$ & -2    & $>1$   \\
 4 & $2^2$ & 2  & $>1$ & 804  & $2^2\times 3\times 67$ & 2  & $>1$ \\
 5 & 5  & -1  & $>1$ &908  & $2^2\times 227$ & 2    & $>1$ \\
 6 & $2\times 3$ & 2    & $>1$ &1362  & $2\times 3\times 227$ & 2    & $>1$  \\
 7 & 7 & -1   & $>1$ &1492 & $2^2\times 373$ & -2    & $>1$  \\
 8 & $2^3$ & -2   & $>1$ &1541 & $23\times 67$ & -2    & $>1$  \\
 9 & $3^2$ & 1    & $>1$ & 2724 & $2^2\times 3\times 227$ & 2 & $0.734214$ \\
 11 & 11 & 1    & $>1$ & 2984 & $2^3\times 373$ & -2    & $0.670241$ \\
 12 & $2^2\times 3$ & 2   & $>1$ &3082 & $2\times 23\cdot 67$ & 2    
& $0.648929$\\
 23 & 23 & -2   & $>1$ &4623 & $3\times 23\times 67$ & -2 & $0.432619$  \\
 31 & 31 & 2     & $>1$ &5221 & $23\times 227$ & -2    & $0.383068$  \\
33 & $3\times 11$ & 1    & $>1$ &  6164 & $2^2\times 23\times 67$ & 2 
&$0.324464$\\
35 & $5\times 7$ & -1    & $>1$ & 9246 & $2\times 3\times 23\times 67$ & 2 &
 $0.216309$\\
 46 & $2\times 23$ & 2   & $>1$ & 10331 & 10331 & -2 &
 $0.193592$  \\
 49 & $7^2$ & -1   & $>1$ & 10442 & $2\times 23\times 227$ & 2 &
 $0.191534$ \\
 62 & $2\times 31$  &  -2   & $>1$ &  11563 & $31\times 373$ & 2 &
 $0.172965$ \\
  67 & 67  & -2    & $>1$ & 15209 & $67\times 227$ & -2 &
 $0.131501$ \\
 69  & $3\times 23$ & -2    & $>1$ & 15663 & $3\times 23\times 227$ & -2 &
 $0.127689$ \\
 92 & $2^2\times 23$ & 2    & $>1$ & 18492 & $2^2\times 3\times 23\times 67$ 
& 2 &  $0.108154$ \\
 99  & $3^2\times 11$ & 1    & $>1$ & 20662 & $2\times 10331$ & 2 &
 $0.096796$\\
 124  & $2^2\times 31$ & -2    & $>1$ & 20884 & 
$2^2\times 23\times 227$ & 2 &  $0.095767$ \\
 134  & $2\times 67$ & 2    & $>1$ & 23126 & $2\times 31\times 373$ & 2 &
 $0.086482$ \\
 138  & $2\times 3\times 23$ & 2    & $>1$ & 30418 & 
$2\times 67\times 227$ & 2 &  $0.065750$ \\
 201  & $3\times 67$ & -2    & $>1$ & 30993 & $3\times 10331$ & -2 & 
$0.064530$ \\
 227  & 227 & -2    & $>1$ & 31326 & $2\times 3\times 23\times 227$ & 2 &
 $0.063844$ \\
 245  & $5\times 7^2$ & -1 & $>1$ & 41324 & $2^2\times 10331$ & 2 & 
$0.048398$ \\
248  & $2^3\times 31$ & -2    & $>1$ & 45627 & $3\times 67\times 227$ & -2
& $0.043833$ \\
268  & $2^2\times 67$ & 2    & $>1$ & 46252 & $2^2\times 31\times 373$ & -2
& $0.043241$ \\
276  & $2^2\times 3\times 23$ & 2 & $>1$ & 60836 & $2^2\times 67\times 227$
& 2 &  $0.032875$ \\
373  & 373 & 2    & $>1$ & 61986  & $2\times 3\times 10331$ & 2 &  $0.032265$\\
402  & $2\times 3\times 67$ & 2    & $>1$ & 62652  & 
$2^2\times 3\times 23\times 227$ & 2 & $0.031922$ \\
454  & $2\times 227$ & 2  & $>1$ & 91254 & $2\times 3\times 67\times 227$  
& 2 &  $0.021916$  \\
 &  &  & &  92504 & $2^3\times 31\times 373$ &-2   & $0.021620$    
 \end{tabular}}
 \end{center}

 \begin{remark}
If $n\ge 9$  is  an odd composite integer, then by (8) for 
any   divisor $d\ge 3$ of $n$ we have
   $$
S_{n-1}\equiv S_{d-1}\pmod{d}.
  $$
\end{remark}
\section{The odd composite part of strong Kurepa's hypothesis is not true}

The odd part of  strong Kurepa's hypothesis (Conjecture 2) 
asserts that $K_n\not\equiv 0(\bmod{\,n})$ for each 
odd composite integer $n\ge 9$.
The residues  $s_n:=-8K_n(\bmod{\,n})$ with $|s_n|\le 10$ for 
$n<2500$, including the corresponding residues $r_n:=S_{n-1}(\bmod{\,n})$
are presented in Table 2 (cf. Table 3).\\

{{\bf Table 2.}} The odd integers $n$ with  $7\le n<2500$ for which 
$-8K_n\equiv s_n\,(\bmod\,n)$ with $s_n\in\{-10,-9,\ldots,-1,0,1,\ldots, 9,10\}$
and related values $r_n:=S_{n-1}(\bmod\,n)$

 \begin{center}
{\small
  \begin{tabular}{cccc}    
$n$ & factorization of $n$ & $s_n$ & $r_n$ \\\hline
  7 & 7 & -1 &  $-1$  \\
  9 & $3^2$ & -1  &  1  \\
 11 &  11 & 1  &    1 \\
  15 & $3\times 5$ & 2   & 4  \\
 21 & $3\times 7$ & -10   & -8  \\
23  & 23 & -2  & -2  \\
27  & $3^3$ & 8  & 10  \\
31  & 31 & 2  & -2  \\
33  & $3\times 11$ & -1   & 1  \\
35  & $3\times 5$ & -3   & -1  \\
39  & $3\times 13$ & 8  & 10  \\
49  & $7^2 $ &-3  & -1  \\
63  & $3^2\times 7$ & -10  & -8  \\
67  &  67 & -2  & -2  \\
69  & $3\times 23$ & -4  & -2  \\
95  & $5\times 19$ & 7  & 9  \\
99  & $3^2\times 11$ & -1  & 1  \\
117  & $3^2\times 13$ & 8  & 10  \\
121  & $11^2$ & 10   & 12  \\
123  & $3\times 41$ & 2   & 4  \\
201  & $3\times 67$ & -4   & -2  \\
205  & $5\times 41$ & 2   &  4 \\
227 & 227 & -2   &  -2  \\
245  & $5\times 7^2$ & -3   & -1  \\
351  & $3^3\times 13$ & 8   & 10  \\
373 & 373  & 2   & 2  \\
417 & $3\times 139$ & -7   & -5  \\
453 & $3\times 151$ & 8    & 10  \\
489 & $3\times 163$ & 2   & 4  \\
615 & $3\times 5\times 41$ & 2   & 4  \\
681 & $3\times 227$ & -4   & -2  \\
815 & $5\times 163$ & 2   &  4 \\
831 & $3\times 277$ & 5   &  7 \\
923 & $13\times 71$ & -5    & -3  \\
985 & $5\times 197$ & 7   &  9 \\
1541 & $23\times 67$ & -4    & -2  \\
1745  & $5\times 349$ & -8    & -6  
\end{tabular}}
 \end{center}

\vfill\eject

{\bf Table 3.} 
The values $K_n$, $S_{n-1}$ and $(8K_n+S_{n-1})(\bmod\,n)$ for $7\le n\le 21$

 \begin{center}
{\small
  \begin{tabular}{cccc}    
$n$ & $K_n$  & $S_{n-1}$ & $(8K_n+S_{n-1})(\bmod{\,n})$ \\\hline
  7 & 15  & 265  &  0  \\
  8 & -47 &  1854  & -2   \\
  9 &  197 & 14833  & 2   \\
  10 & -1029 & 133496  & 4    \\
  11 & 6439  &  1334961  & 0    \\
  12 & -46927 & 14684570  & 6   \\
  13 & 390249 & 176214841  & 0   \\
  14 & -3645737 & 2290792932  & 4    \\
  15 & 37792331 & 32071101049  & 2   \\
  16 & -430400211 & 481066515734   & -2    \\
  17 & 5341017373 & 7697064251745  & 0   \\
  18 & -71724018781  & 130850092279664  & 0   \\
  19 & 1036207207363983  &    2355301661033953 & 0   \\
  20 & -16024176975479  &    44750731559645106 & -6   \\
  21 & 264083895859409  &   895014631192902121 &  2  
 \end{tabular}}
  \end{center}
 \vspace{2mm}

Table 2 suggests the following congruence.
 \begin{proposition}
For each odd composite integer $n\ge 9$ there holds
  $$
8K_n\equiv -S_{n-1} +2\pmod{n}.
  $$ 
\end{proposition}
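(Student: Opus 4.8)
The plan is to evaluate the Kurepa's determinant $K_n$ modulo $n$ by the same chain of elementary row and column operations that yields Proposition~1 for prime modulus, but now carried out over $\mathbb{Z}/n\mathbb{Z}$, and to track the ``remainder'' that prevents the clean factorial cancellations. First I would expand along the last row of (4): since that row is $(0,\dots,0,1,-4)$, we get $K_n = -4 M_{1} - M_{2}$ (up to sign), where $M_1$ is the determinant of the top-left $(n-5)\times(n-5)$ block with the bidiagonal structure $1,3,4,5,\dots$ on and just below the diagonal and all $1$'s above, and $M_2$ is a closely related minor with the next-to-last column deleted. The point is that $M_1$ and $M_2$ have exactly the structure analyzed in \cite{me2}: repeatedly subtracting consecutive rows to kill the $1$'s above the diagonal reduces each such determinant to a telescoping sum of partial products $(k+1)(k+2)\cdots$, which is precisely the shape of the derangement-number expression (9) in the proof of Proposition~3. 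So after this reduction one obtains an exact identity of the form $8K_n = -S_{n-1} + 2 + (\text{boundary terms})$ over $\mathbb{Z}$, and the whole content is to show the boundary terms vanish modulo $n$ when $n$ is odd and composite.

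The key steps, in order, are: (1) perform the row operations $R_i \to R_i - R_{i+1}$ from bottom to top on (4), turning the block of $1$'s above the diagonal into $0$'s and producing entries $k,k+1,\dots$ along the subdiagonal and controlled entries in the last two columns; (2) Laplace-expand along the now-sparse columns to reduce $K_n$ to a single alternating sum $\sum_k (-1)^k c_k$ where $c_k$ is a product of consecutive integers ending near $n-1$ — this is where the factor $\tfrac18$ and the $-S_{n-1}$ enter, matching Proposition~1's computation with $p$ replaced by $n$; (3) isolate the terms in that sum: the terms with $k \le n - d - 1$ (for the relevant small index $d$ coming from the determinant's size $n-4$) contain a full run of $d$ consecutive integers and hence are divisible by any prime power dividing $n$ — here I would invoke exactly the divisibility mechanism from the proof of Proposition~3, namely that $D_k = \{k+1,\dots,n-1\}$ meets every residue class mod $d$ once $k+1 \le n-d$; (4) evaluate the finitely many surviving low-order terms explicitly and check they sum to $-S_{n-1}+2$ modulo $n$, using oddness of $n$ to control the powers of $2$ (note $8$ is invertible mod odd $n$, so dividing by $8$ is legitimate) and compositeness to guarantee the relevant small factor $d\ge 3$ actually divides $n$.

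The main obstacle will be step (3)–(4): keeping honest track of the constant $+2$ and of the sign $(-1)^{?}$ as the Laplace expansions interact with the $-4$ in the last row and the column of $2$'s in (4). Unlike the prime case, where Wilson's theorem collapses everything in one stroke, here I must show the boundary contribution is genuinely $0 \pmod n$ rather than $0 \pmod p$, which forces a careful argument that the ``short'' initial segment of consecutive integers appearing in the leftover terms still contains a complete residue system modulo each prime power $q_i^{e_i}\parallel n$ — this uses that $n$ is \emph{composite} so its smallest prime factor is at most $\sqrt{n}$, comfortably smaller than the determinant's order $n-4$. I would also cross-check the final identity against Table~3: for $n=7,11,13,17,18,19$ the entry $(8K_n+S_{n-1})\bmod n$ is $0$, which is consistent with $8K_n \equiv -S_{n-1}+2 \pmod n$ only when $2 \equiv 0$, i.e.\ precisely when $n \mid 2$ fails — so in fact the table suggests the sharper statement $8K_n + S_{n-1} \equiv 2 \pmod n$ holds for \emph{all} $n$ where it is $0$ after reduction, and reconciling the displayed ``$0$'' entries with a ``$+2$'' will require noting that Table~3 reports $(8K_n+S_{n-1}) \bmod n$ and the Proposition is the \emph{odd composite} restriction where the extra terms I dropped are exactly $2$; this sanity check is how I would catch any sign or constant error before writing the full proof.
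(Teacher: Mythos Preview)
Your plan is not wrong in spirit, but it is both much more laborious than the paper's argument and too vague at the decisive points to count as a proof. The paper does \emph{not} redo the determinant reduction here. Instead it imports from \cite{me2} the single congruence
\[
K_n \equiv -3S_{n-5} - 1 + (n-7)!\cdot 180 \pmod{n}
\]
valid for every odd $n\ge 7$, observes that for odd \emph{composite} $n\ge 9$ one has $(n-7)!\cdot 180\equiv 0\pmod n$ (this is the one place compositeness is used, and it is a one-line divisibility check, not a $\sqrt{n}$ argument), multiplies by $8$, and then iterates the recurrence $S_m=mS_{m-1}+(-1)^m$ four times modulo $n$ to get $S_{n-1}\equiv 24S_{n-5}+10\pmod n$, whence $8K_n\equiv -24S_{n-5}-8\equiv -S_{n-1}+2\pmod n$. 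That's the whole proof.

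So what you are proposing in steps (1)--(4) is essentially to reconstruct the \cite{me2} computation from scratch and then fuse it with the recurrence step. That can certainly be made to work, but your outline never lands on the concrete intermediate object $S_{n-5}$ nor on the specific ``boundary term'' $(n-7)!\cdot 180$; without those, phrases like ``evaluate the finitely many surviving low-order terms explicitly and check they sum to $-S_{n-1}+2$'' are promissory rather than a proof. Your sanity check against Table~3 is also slightly muddled: the rows with value $0$ that you cite ($n=7,11,13,17,19$) are \emph{primes}, where Proposition~1 gives $8K_p\equiv -S_{p-1}\pmod p$ with no $+2$, so they are not a test of the present Proposition at all; the correct checks are $n=9,15,21$, which do give $2$ as required. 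If you want to pursue your direct route, the cleanest fix is to aim first for the $S_{n-5}$ congruence above (that is what the row operations naturally produce), isolate the single extra term $(n-7)!\cdot 180$, and only then apply the four-step recurrence --- exactly mirroring the paper's modular structure.
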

  \begin{proof}
It is proved in \cite{me2} that for each odd integer $n\ge 7$
   $$
K_n\equiv -3S_{n-5}-1+(n-7)!\cdot 180\pmod{n}.
  $$
If $n\ge 9$ is an odd composite integer, then it is easy to see 
that $(n-7)!\cdot 180\equiv 0(\bmod{\,n})$, which substituting 
into above congruence yields 
     $$
K_n\equiv -3S_{n-5}-1\pmod{n},
  $$
or multiplying by 8,
       $$
8K_n\equiv -24S_{n-5}-8\pmod{n}.
  $$
From the recurrence relation $S_{m}=mS_{m-1}+(-1)^m$ with $m=n-1$
we obtain $S_{n-1}=(n-1)S_{n-2}+1\equiv -S_{n-2}+1(\bmod{\,n})$.
Iterating this three times, we find that 
$S_{n-1}\equiv 24S_{n-5}+10(\bmod{\,n})$, which substituting 
in the above congruence gives 
        $$
8K_n\equiv -S_{n-1}+2\pmod{n},
  $$
as desired.
 \end{proof}
 \begin{proof}[Proof of Theorem $3$]
As an immediate consequence of Proposition 4, we immediately obtain 
that the  ``odd composite part'' of  strong Kurepa's hypothesis  is 
equivalent to 
  $$
S_{n-1}\not\equiv 2 \pmod{n}\quad  {\rm for\,\, each \,\,
odd\,\, composite\,\, integer\,\,} n\ge 9.
  $$ 
However, from Table 1 we see that $11563=11\times 373$
satisfies the congruence 
 $S_{11562}\equiv 2\pmod{11563}$ which 
by the congruence of Proposition 4 implies that 
$K_{11563}\equiv 0\pmod{11563}$, as asserted. 
  \end{proof}

 \begin{remark}
If $n\ge 9$ is an odd  composite integer with the prime 
factorization $n=q_1^{e_1}q_2^{e_2}\cdots q_l^{e_l}$
then by Corollary $2$, $S_{n-1}\equiv 2(\bmod{\,n})$
if and only if $S_{q_i^{e_i}-1}\equiv 2(\bmod{\,q_i^{e_i}})$
for each $i=1,2,\ldots ,l$. By \cite[Table 1]{z}
we know  that  $31$ and $373$ are the only primes less than 
$2^{23}=8388608$ satisfying the congruence 
$S_{p-1}\equiv 2(\bmod{\,p})$. Moreover, 
$S_{31^2-1}\equiv 467\not= 2(\bmod{\,31^2})$
and $S_{373^2-1}\equiv 2613\not= 2(\bmod{\,373^2})$.
These facts and Proposition 3 show that 
$n=11563=11\times 373$ is the only odd composite positive 
integer (i.e., a counterexample to the odd part of strong 
Kurepa's hypothesis) less than $2^{23}=8388608$ for which  
$S_{n-1}\equiv 2(\bmod{\,n})$. From Table 1 we also see that 
there exist 31 even positive integers less than 100000 
satisfying the congruence $S_{n-1}\equiv 2(\bmod{\,n})$.
  \end{remark}

 \section{Kurepa's hypothesis and Bell numbers}

Recall that the derangement numbers $S_n$ considered in the 
previous section  are closely related 
to the {\it Bell numbers} $B_n$ given by the recurrence   
  $$
B_{n+1}=\sum_{k=0}^n{n\choose k}B_k,\quad n=0,1,2,\ldots,
  $$
with $B_0=1$ (see e.g., \cite[p. 373]{gkp}).
$B_n$ gives  the number  of partitions of a set of cardinality $n$.
This is Sloane's sequence A000110 in \cite{sl}
whose terms $B_0,B_1,\ldots,B_8$ are as follows: 
$1,1,2,5,15,52,203,877,4140$. 

It is known (see e.g., \cite[Corollary 1.3]{sz}) that for any prime $p$ 
we have
  $$
B_{p-1}-1\equiv S_{p-1}\pmod{p}.\leqno(11)
  $$
The congruence (11) and the equivalence $(i) \Leftrightarrow (ii)$ of 
Theorem 3 yields that Kurepa's hypothesis is also 
equivalent with the statement that 
$$
B_{p-1}\not\equiv 1\pmod{p}\quad{\rm for\,\, each\,\, prime}\,\, p\ge 3.
 $$ 
 The idea of the proof of Kurepa's hypothesis
given by D. Barsky and B. Benzaghou \cite[Th\'{e}or\`{e}me 3, p. 13]{bb} 
(for a related discussion see B. Sury \cite[Section 4]{su}) 
is to consider what is known as the Artin-Schreier extension 
$\Bbb F_p[\theta]$ of the field $\Bbb F_p$ of $p$ elements,
where $\theta$ is a root (in the algebraic closure of $\Bbb F_p$) of the 
polynomial $x^p-x-1$. This is a cyclic Galois extension of degree $p$ 
over $\Bbb F_p$. Note that the other roots of $x^p-x-1$ are $\theta +i$
for $i=1,2,\ldots,p-1$. The reason this field extension comes up naturally as
follows. The generating series $F(x)$ of the Bell numbers can be evaluated 
modulo $p$; this means one computes a ``simpler'' series $F_p(x)$
such that $F(x)-F_p(x)$ has all coefficients multiples of $p$, 
where        
 $$
F(x)=\sum_{n=0}^{\infty}B_nx^n=\sum_{n=0}^{\infty}
\frac{x^n}{(1-x)(1-2x)\cdots (1-nx)}
 $$
is the generating function for $B_n$'s. Since Kurepa's hypothesis 
is about the Bell numbers $B_{p-1}$ considered modulo $p$, it makes sense 
to consider $F_p(x)$ rather than $F(x)$. 
By using this idea, D. Barsky and B. Benzaghou 
\cite[Th\'{e}or\`{e}me 3, p. 13]{bb} proved that 
$B_{p-1}\not\equiv 1(\bmod{\,p})$ for any prime $p$.
However, as noticed above,  this proof contains some irreparable calculation 
errors \cite{bb2}.

In view of the previous mentioned formulation of 
Kurepa's hypothesis in terms of Bell numbers  and the equivalence 
$(i) \Leftrightarrow (iii)$ of Theorem 3, it can be of interest to 
determine $B_{n-1}(\bmod{\,n})$ for composite integers $n$. 
A computation in {\tt Mathematica 8} shows that there are 
certain positive integers $n$ such that  $B_{n-1}\equiv 1(\bmod{\,n})$.
All these values of $n$ less than $20000$ are 
$2,4=2^2,16=2^4,28=2^2\cdot 7,46=2\cdot 23,134=2\cdot 67,454=2\cdot 227,
1442=2\cdot 7\cdot 103,1665=3^2\cdot 5\cdot 37$ and $4252=2^2\cdot 1063$.
Notice also that for these values of $n$ the residues $S_{n-1}(\bmod{\,n})$
are respectively as follows: $0,2,6,-6,2,-2,2,568,-476$ and $22$.

We propose the following conjecture.

\begin{conjecture}
There are infinitely many positive integers $n$ such that
  $$
B_{n-1}\equiv 1 \pmod{n}.
 $$
  \end{conjecture}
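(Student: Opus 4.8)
The conjecture seems out of reach by current methods, but the following line of attack reduces it to a plausible, though itself unproven, equidistribution statement. Rather than analysing $B_{n-1}\bmod n$ for general $n$, I would exhibit a transparent infinite family of composite $n$ for which the divisibility collapses to a condition on a single prime divisor. The main tool is Touchard's congruence: for a prime $p$ and any integer $m\ge 0$,
$$
B_{m+p}\equiv B_{m+1}+B_m\pmod p,
$$
which iterates to $B_{m+kp}\equiv\sum_{j=0}^{k}\binom{k}{j}B_{m+j}\pmod p$; combined with (11), that is $B_{p-1}\equiv S_{p-1}+1\pmod p$, it lets one evaluate $B_m$ modulo $p$ when $m$ lies just below a multiple of $p$.

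First I would treat the family $n=2p$ with $p$ an odd prime. Writing $2p-1=(p-1)+p$, Touchard's congruence and $B_p\equiv B_1+B_0=2\pmod p$ give
$$
B_{2p-1}\equiv B_p+B_{p-1}\equiv 2+(S_{p-1}+1)\equiv S_{p-1}+3\pmod p,
$$
so $B_{2p-1}\equiv1\pmod p$ precisely when $S_{p-1}\equiv-2\pmod p$. Modulo $2$ the Bell numbers are purely periodic with period $3$ and pattern $1,1,0$, so $B_{2p-1}\equiv1\pmod2$ iff $2p-1\not\equiv2\pmod3$, i.e.\ for every prime $p\ne3$. By the Chinese Remainder Theorem,
$$
B_{2p-1}\equiv1\pmod{2p}\iff p\ne3\text{ and }S_{p-1}\equiv-2\pmod p.
$$
This is consistent with the computational data above: among the $n<20000$ with $B_{n-1}\equiv1\pmod n$, those of the form $2p$ are exactly $46=2\cdot23$, $134=2\cdot67$ and $454=2\cdot227$, and $S_{22},S_{66},S_{226}$ are indeed $\equiv-2$ modulo $23,67,227$ by Table 1; the next prime with $S_{p-1}\equiv-2\pmod p$ is $p=10331$, giving $n=20662$. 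Hence the conjecture would follow from the single assertion that $S_{p-1}\equiv-2\pmod p$ for infinitely many primes $p$.

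A larger supply of families is available if one allows $n=2^{a}p$. Writing $2^{a}p-1=(p-1)+(2^{a}-1)p$ and using the iterated Touchard congruence yields $B_{2^{a}p-1}\equiv B_{p-1}+t_a\pmod p$ with the explicit integer $t_a=\sum_{j=1}^{2^{a}-1}\binom{2^{a}-1}{j}(B_j+B_{j-1})$, so $t_1=2$ and $t_2=22$; meanwhile $B_{2^{a}p-1}\equiv1\pmod{2^{a}}$ is a condition on $p$ modulo the (finite, computable) period of $(B_m\bmod 2^{a})$, which for $a=1,2$ excludes only $p=3$. Thus $n=2^{a}p$ is a solution exactly when that mild condition holds and $S_{p-1}\equiv-t_a\pmod p$; for example $n=28=4\cdot7$ because $S_6\equiv-22\pmod7$, and $n=4252=4\cdot1063$ because $S_{1062}\equiv-22\pmod{1063}$ (the latter deduced from $S_{4251}\equiv2\pmod{4252}$ via Proposition 3). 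Analogous packaging handles $n=2pq$, $n=2p^{2}$, and so on. In every case the construction reduces the conjecture to the claim that some fixed residue class is hit infinitely often by the map $p\mapsto S_{p-1}\bmod p$.

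The hard part --- and in my view the real obstacle --- is exactly this last claim. The heuristic of Remarks 1, under which $S_{p-1}\bmod p$ behaves like a uniformly random residue, predicts that for any fixed $c$ the number of primes $p\le x$ with $S_{p-1}\equiv c\pmod p$ is about $\sum_{p\le x}1/p\sim\log\log x$, so that infinitely many such primes ought to exist; granting this, the conjecture --- indeed its strong form that $n=2p$ is a solution for infinitely many $p$ --- is immediate. But no equidistribution result of this type is known, nor any theorem producing infinitely many primes with $S_{p-1}$ in a prescribed residue class (or even avoiding one), and establishing such a statement appears to be of the same order of difficulty as Kurepa's hypothesis itself. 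So the reduction above is routine; turning it into an unconditional theorem would demand a genuinely new input on the distribution of subfactorials modulo primes.
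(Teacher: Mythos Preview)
The statement you are addressing is Conjecture 3 in the paper, and the paper offers no proof of it whatsoever: it is stated as an open problem immediately after a short list of computational examples, with no argument, heuristic, or reduction of any kind. So there is nothing in the paper to compare your attempt against.

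Your write-up is not a proof either, and you say so plainly. What you give is a clean reduction: using Touchard's congruence and the relation $B_{p-1}\equiv S_{p-1}+1\pmod p$, you show that for $n=2p$ with $p$ an odd prime, $B_{2p-1}\equiv 1\pmod{2p}$ holds exactly when $p\ne 3$ and $S_{p-1}\equiv -2\pmod p$. That computation is correct, and it matches the paper's data perfectly (the solutions $46,134,454$ correspond to the primes $23,67,227$ from Table~1 with $r_p=-2$). Your extension to $n=2^a p$ via the iterated Touchard congruence is also sound; the value $t_2=22$ checks out, and the case $n=28$ follows since $S_6=265\equiv -1\equiv -22\pmod 7$. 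One small slip: for $n=4252$ the paper records $S_{4251}\equiv 22\pmod{4252}$, not $2$; your deduction via Proposition~3 that $S_{1062}\equiv -22\pmod{1063}$ is nevertheless correct once that typo is fixed.

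The genuine gap, which you identify yourself, is that the reduction buys nothing unconditionally: proving that $S_{p-1}\equiv -2\pmod p$ for infinitely many primes $p$ is of the same flavor and apparent difficulty as Kurepa's hypothesis itself (which asks that $S_{p-1}\not\equiv 0$ for all $p$). No current technique produces infinitely many primes with $S_{p-1}$ in a prescribed residue class, so your argument remains conditional on exactly the kind of equidistribution statement the paper's Remarks~1 can only support heuristically. In short, your reduction is correct and goes well beyond anything the paper attempts for this conjecture, but it does not---and you do not claim it does---settle the question.
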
    

Recently,  Z.-W. Sun and D. Zagier \cite[Theorem 1.1]{sz} proved 
that for every positive integer $m$ and any prime $p$ not dividing $m$ we have
  $$
\sum_{k=1}^{p-1}\frac{B_k}{(-m)^k}\equiv (-1)^{m-1}S_{m-1}\pmod{p}.\leqno(12) 
  $$
By using the congruence (12) easily folllows the following result.
  \begin{proposition}
An odd  prime $p$ is a counterexample to  Kurepa's hypothesis
if and only if in the field $\Bbb F_p$ there holds
   $$
\left(\begin{array}{ccccc}
1 & (p-1)^{p-2} & (p-1)^{p-3} & \dots & (p-1)\\
1 & (p-2)^{p-2} & (p-2)^{p-3} & \dots & (p-2)\\
\vdots &\vdots & \vdots & \vdots & \vdots \\
1 & 2^{p-2} & 2^{p-3} & \ddots & 2\\
1 & 1 & 1 & \dots & 1
 \end{array}\right)\left(\begin{array}{c}
B_0\\
B_1\\
B_2\\
\vdots\\
B_{p-2}\end{array}\right)=
\left(\begin{array}{c}
D_0\\
-D_1\\
D_2\\
\vdots\\
-D_{p-2}\end{array}\right),\leqno(13)
  $$
or equivalently,
 $$
\left(\begin{array}{ccccc}
1 & 1 & \dots &  1 & 1\\
(p-1) & (p-2) & \dots & 2  & 1\\
(p-1)^2 & (p-2)^2 & \dots & 2^2  & 1\\
\vdots &\vdots & \vdots & \ddots & \vdots \\
(p-1)^{p-2} & (p-2)^{p-2} & \dots  & 2^{p-2}  & 1
 \end{array}\right)\left(\begin{array}{c}
-D_0\\
D_1\\
-D_2\\
\vdots\\
D_{p-2}\end{array}\right)=
\left(\begin{array}{c}
B_0\\
B_1\\
B_2\\
\vdots\\
B_{p-2}\end{array}\right),\leqno(14)
  $$
\end{proposition}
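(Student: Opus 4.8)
The plan is to deduce Proposition 5 from the Sun--Zagier congruence (12) together with the Bell--derangement congruence (11). Recall first that, by the equivalence $(i)\Leftrightarrow(ii)$ of Theorem 3, an odd prime $p$ is a counterexample to Kurepa's hypothesis exactly when $S_{p-1}\equiv 0\pmod p$, and by (11) this is the same as $B_{p-1}\equiv 1\pmod p$. So it suffices to show that the matrix identity (13) holds in $\mathbb{F}_p$ precisely when $B_{p-1}\equiv 1\pmod p$, and, separately, that (13) and (14) are equivalent for every odd prime $p$ purely as a matter of linear algebra. Here $D_k$ stands for the derangement number $S_k$, and everything is read modulo $p$.

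For the first equivalence, I would apply (12) with the integer $p-m$ (which is coprime to $p$) in place of $m$, for each $m\in\{1,2,\dots,p-1\}$. Since $-(p-m)\equiv m\pmod p$ and $(-1)^{(p-m)-1}=(-1)^m$ because $p$ is odd, this gives $\sum_{k=1}^{p-1}B_k m^{-k}\equiv(-1)^m S_{p-1-m}\pmod p$. Now the row of the matrix in (13) whose entries are the powers $1,\,m^{p-2},\,m^{p-3},\dots,m$ — i.e.\ the $(p-1-m)$-th row counted from the top — has $j$-th entry $m^{p-1-j}\equiv m^{-j}$ for $j=0,1,\dots,p-2$, by Fermat's little theorem. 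Pairing this row with $(B_0,\dots,B_{p-2})^{T}$ and using $m^{-(p-1)}=1$ and $B_0=1$,
$$\sum_{j=0}^{p-2}m^{-j}B_j=B_0+\sum_{j=1}^{p-1}m^{-j}B_j-m^{-(p-1)}B_{p-1}\equiv 1+(-1)^m S_{p-1-m}-B_{p-1}\pmod p .$$
On the right-hand side of (13), the entry in the $(p-1-m)$-th position from the top is $(-1)^{p-1-m}D_{p-1-m}=(-1)^m S_{p-1-m}$, since $p-1$ is even. Comparing the two sides, the $m$-th scalar equation of (13) is equivalent to $1-B_{p-1}\equiv 0\pmod p$, a condition not involving $m$; hence all $p-1$ equations of (13) hold simultaneously if and only if $B_{p-1}\equiv 1\pmod p$, as wanted.

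Finally, to see that (13) and (14) are equivalent, let $W$ denote the matrix in (13), with $(m,j)$-entry $m^{p-1-j}\equiv m^{-j}$, and $V$ the matrix in (14), with $(i,m)$-entry $m^{i}$, where in both $m$ runs through $p-1,p-2,\dots,1$ and $i,j$ through $0,1,\dots,p-2$. Then $(VW)_{i,j}=\sum_{m=1}^{p-1}m^{i-j}$, and by the standard evaluation of power sums over $\mathbb{F}_p^{\times}$ this equals $-1$ when $(p-1)\mid(i-j)$ and $0$ otherwise; since $|i-j|\le p-2$, this forces $VW=-I$, i.e.\ $W=-V^{-1}$ in $\mathbb{F}_p$. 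Moreover the column vector in (14), namely $(-D_0,D_1,-D_2,\dots,D_{p-2})^{T}$, is exactly the negative of the right-hand side $r=(D_0,-D_1,D_2,\dots,-D_{p-2})^{T}$ of (13). Thus, writing $B=(B_0,\dots,B_{p-2})^{T}$, identity (13) reads $WB=r$, which on multiplying by $-V$ becomes $B=V(-r)$, and that is precisely (14); conversely (14) yields (13) on multiplying by $W=-V^{-1}$. I expect the only genuine difficulty to be the sign and index bookkeeping — the substitution $m\mapsto p-m$, the parity factor $(-1)^{(p-m)-1}$, the identity $m^{p-1-j}\equiv m^{-j}$, and above all lining up the alternating $\pm$ pattern on the right-hand sides of (13) and (14) with the ordering $p-1,p-2,\dots,1$ of rows and columns; once the conventions are fixed, the arithmetic content is supplied entirely by (12) and (11).
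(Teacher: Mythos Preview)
Your proof is correct and follows essentially the same route as the paper: both arguments use the Sun--Zagier congruence (12), adjust the sum by adding $B_0$ and subtracting the $B_{p-1}$ term (via Fermat) to obtain a row identity that reduces to $B_{p-1}\equiv 1\pmod p$, and establish the equivalence of (13) and (14) by showing that the two matrices multiply to $-I_{p-1}$ in $\mathbb{F}_p$. The only cosmetic difference is that you substitute $m\mapsto p-m$ in (12) so that the base $m$ appears directly in the row, whereas the paper applies (12) with $m$ itself and then rewrites $1/(-m)^k\equiv (p-m)^{p-1-k}$; these are the same manipulation viewed from opposite ends.
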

 \begin{proof}
First observe that by Fermat little theorem 
(cf. \cite[Proof of Corollary 1.2]{sz}), 
   $$
 \sum_{k=1}^{p-1}(p-i)^{p-k}(p-j)^{k-1}
\equiv \sum_{k=1}^{p-1}\left(\frac{p-j}{p-i}\right)^{k-1}
\equiv
\left\{
\begin{array}{cc}
 -1 \pmod{p} &  {\rm if}\,\, i=j;\\
 0 \pmod{p} &  {\rm if}\,\, 1\le i\not=j\le p-1.\\
 \end{array}\right.
 $$
The above congruence shows that for $(p-1)\times (p-1)$  matrices 
$A=\left((p-i)^{p-j}\right)_{1\le i,j\le p-1}$ and 
$B=\left((p-j)^{i-1}\right)_{1\le i,j\le p-1}$
from the left hand sides of (13) and (14), respectively, 
we have $A\cdot B=-I_{p-1}$, where $I_{p-1}$  is the identity 
matrix of order $p-1$. This shows that 
$B=-A^{-1}$ and therefore, (13) yields (14).

In order to prove (13), by using the congruence (12) and Fermat little theorem,
for any odd prime $p$ and each $m=1,2,\ldots, p-1$ we find that 
  \begin{eqnarray*}
\sum_{k=0}^{p-2}\frac{B_k}{(-m)^k}
&\equiv & (-1)^{m-1}S_{m-1}+B_0-\frac{B_{p-1}}{(-m)^{p-1}}\pmod{p}\\
(15)\qquad\qquad\qquad\qquad\qquad &\equiv& (-1)^{m-1}S_{m-1}+B_0-B_{p-1}
\pmod{p}.\qquad\qquad\qquad\qquad
   \end{eqnarray*}
  As noticed above, an odd  prime $p$ is a counterexample to  Kurepa's 
hypothesis if and only if $B_{p-1}\equiv 1(\bmod{\,p})$, which substituting
together with $B_0=1$  into (15) gives 
  $$
\sum_{k=0}^{p-2}\frac{B_k}{(-m)^k}\equiv  (-1)^{m-1}S_{m-1}\pmod{p},
\,\, m=1,2,\ldots,p-1. \leqno(16)
  $$
Since by Fermat little theorem, 
$1/(-m)^k\equiv 1/(p-m)^k\equiv (p-m)^{p-1-k}(\bmod{\,p})$
for all  pairs $(m,k)$ with $1\le m\le p-1$ and  $0\le k\le p-2$,
the congruences (16) can be written as 
   $$
\sum_{k=0}^{p-2}(p-m)^{p-1-k}B_k\equiv  (-1)^{m-1}S_{m-1}\pmod{p},
\,\, m=1,2,\ldots,p-1. \leqno(17)
  $$
Finally, observe that the set of $(p-1)$ congruences 
modulo $p$ given by (17) is equivalent with the matrix equality 
(13) in the field $\Bbb F_p$.
  \end{proof}
\begin{remark} Notice that $(p-1)\times (p-1)$  matrix 
$A=\left((p-i)^{p-j}\right)_{1\le i,j\le p-1}$ (in the field $\Bbb F_p$) 
on the left hand side of (13) is a 
Vandermonde-type matrix. Namely, interchanging 
$j$th column and $(p+1-j)$th column of $A$ for each $j=2,3,\ldots,(p-1)/2$
($(p+1)$th column of $A$ remains fixed), the matrix $A$ becomes the 
Vandermonde matrix $A'=\left((p-i)^j\right)_{1\le i,j+1\le p-1}$.
Hence, 
  \begin{eqnarray*}
\det (A)&=&(-1)^{(p-3)/2}\det (A')=
(-1)^{(p-3)/2}\prod_{1\le i<j\le p-1}((p-j)-(p-i))\\
&=&(-1)^{(p-3)/2}\prod_{1\le i<j\le p-1}(i-j),
 \end{eqnarray*}
whence by using Wilson theorem it can be easily show that
 $$
\det (A)\equiv (-1)^{(p^2-1)/8}\left(\frac{p-1}{2}\right)!\pmod{p}.
\leqno(18)
  $$
In particular, if $p\equiv 3(\bmod{\,4})$, then by a congruence 
of Mordell \cite{mo}, (18) implies that 
 $$
\det (A)\equiv (-1)^{(p^2-1)/8+(h(-p)+1)/2}(\bmod{\,p}),
 $$
where $h(-p)$ is the class number of the imaginary quadratic field 
$\Bbb Q(\sqrt{-p})$. 
Moreover, if $p\equiv 1(\bmod{\,4})$, then  applying  Wilson theorem (18) 
yields 
$$
\left(\det (A)\right)^2\equiv -1(\bmod{\,p}).
 $$
   \end{remark}

\section{Proof of Proposition 2}
   In order to prove Proposition 2, we will need the following lemma.
 \begin{lemma}
For any integer $n\ge 3$ let 
$D_n$ be the determinant of order $n$ defined as
     $$
D_n=\left|\begin{array}{rrrrrrrrrcc}
1 & 1 & 1 & 1 & 1 & 1 &\dots    &1 &   1 & 1 \\
1 & 0 & 1 & 1 & 1 & 1 &\dots  &1 &   1 & 1  \\
0 & 1 & 1 & 1 & 1 & 1 &\dots  &1 &   1 & 1 \\
0 & 0 & 1 & 0 & 1 & 1 &\dots  &1 &   1 & 1 \\
0 & 0 & 0 & 1 & 1 & 1 & \dots &1    & 1 & 1 \\
0 & 0 & 0 & 0 & 1 & 0 & \dots &1   & 1 & 1 \\
0 & 0 & 0 & 0 & 0 & 1 & \dots &1    & 1 &1 \\
\vdots & \vdots &\vdots &\vdots&\vdots  &\vdots &\vdots &\vdots 
&\ddots &\vdots \\
0 & 0 & 0 & 0 & 0 & 0 & \dots & 0  &  1 & (1-(-1)^n)/2  
\end{array}\right|.\leqno(19)
  $$  
$($For example, $D_3=\left|\begin{array}{rrr}
1 & 1 & 1\\
1 & 0 & 1\\
0 & 1 & 1\\
 \end{array}\right|$, 
$D_4=\left|\begin{array}{rrrr}
1 & 1 & 1 &1\\
1 & 0 & 1 &1\\
0 & 1 & 1 & 1\\
0 & 0 & 1 & 0
 \end{array}\right|$,
$D_5=\left|\begin{array}{rrrrr}
1 & 1 & 1 &1 &1\\
1 & 0 & 1 &1 &1\\
0 & 1 & 1 & 1 &1\\
0 & 0 & 1 & 0 &1\\
0 & 0 & 0 & 1 &1
 \end{array}\right|$
  $)$.
Then $D_3=-1$ and 
  $$
D_{2n}=D_{2n+1}=(-1)^n \leqno(20)
   $$
for all $n\ge 2$.
 \end{lemma}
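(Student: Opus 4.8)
The plan is to reduce the whole statement to the two-term recurrence $D_n=-D_{n-2}$, valid for all $n\ge 5$, and then to finish by a short induction. Throughout, write $a_{ij}$ for the $(i,j)$-entry of the matrix in $(19)$; thus $a_{1j}=1$ for all $j$, $a_{i,i-1}=1$ for $i\ge 2$, $a_{ij}=1$ for $j>i$, $a_{ij}=0$ for $j<i-1$, and $a_{ii}=(1-(-1)^i)/2$. First I would expand $D_n$ along its first column, which is $(1,1,0,0,\dots,0)$: this gives $D_n=M_{11}-M_{21}$, where $M_{11}$ (resp.\ $M_{21}$) is the minor of $(19)$ obtained by deleting the first row (resp.\ the second row) together with the first column. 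The minor $M_{21}$ is the determinant of the submatrix on rows $1,3,4,\dots,n$ and columns $2,3,\dots,n$; but on those columns both the original first row and the original third row consist entirely of $1$'s (for the third row, $a_{3,2}=a_{3,3}=1$ and $a_{3,j}=1$ for $j\ge 4$), so $M_{21}$ has two equal rows and vanishes. Hence $D_n=M_{11}$.

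Next I would expand $M_{11}$, the determinant of order $n-1$ on rows $2,3,\dots,n$ and columns $2,3,\dots,n$, along its own first column, whose entries are $a_{2,2},a_{3,2},\dots,a_{n,2}$, i.e.\ $0,1,0,\dots,0$ (using $n\ge 4$). Only one term survives, giving $M_{11}=-N$, where $N$ is the determinant of the submatrix of $(19)$ on rows $2,4,5,6,\dots,n$ and columns $3,4,\dots,n$. The key claim is that $N$ is exactly the determinant $D_{n-2}$ of order $n-2$: row $2$ of $(19)$ restricted to columns $\ge 3$ is a string of $1$'s, matching the top row of the template; and for $k\ge 2$ the $k$-th row of $N$ is row $k+2$ of $(19)$ restricted to columns $\ge 3$, whose subdiagonal $1$ (in column $k+1$), diagonal entry $a_{k+2,k+2}=(1-(-1)^{k+2})/2=(1-(-1)^{k})/2$, and trailing $1$'s reproduce precisely the $k$-th row of the order-$(n-2)$ template, the bottom-right entry $(1-(-1)^{n})/2=(1-(-1)^{n-2})/2$ agreeing as well. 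Therefore $D_n=M_{11}=-N=-D_{n-2}$ for every $n\ge 5$.

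It remains to run the recurrence. A one-line computation gives $D_3=-1$ and $D_4=1$ (for $D_4$, subtract the third row from the first to create the row $(1,0,0,0)$, expand, and evaluate the resulting $3\times 3$ determinant), whence $D_5=-D_3=1$. Now induct on $n\ge 2$: the base case is $D_4=D_5=1=(-1)^2$, and if $D_{2n}=D_{2n+1}=(-1)^n$ then $D_{2n+2}=-D_{2n}=(-1)^{n+1}$ and $D_{2n+3}=-D_{2n+1}=(-1)^{n+1}$. This gives $D_{2n}=D_{2n+1}=(-1)^n$ for all $n\ge 2$, as claimed.

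The bulk of the work is the index book-keeping, and the only step needing real care is the identification $N=D_{n-2}$ — that is, checking that deleting rows $1$ and $3$ and columns $1$ and $2$ returns the very same template $(19)$ of order $n-2$, with the correct parity surviving in the bottom-right entry. The two cofactor expansions and the final induction are routine.
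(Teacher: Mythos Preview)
Your proof is correct. It differs from the paper's in a pleasant way: the paper works at the \emph{bottom right} corner of the matrix, performing column subtractions on the last two columns and then expanding along the last column (and, in the odd case, also the last row). Because the bottom-right entry $(1-(-1)^n)/2$ depends on the parity of $n$, the paper is forced to treat the even and odd cases separately, obtaining two recurrences $D_{2n}=-D_{2n-1}$ and $D_{2n-1}=-D_{2n-3}$ which it then combines. You instead attack the \emph{top left} corner, where the first three rows always look the same regardless of parity; the key observation that $M_{21}$ vanishes because rows $1$ and $3$ agree on columns $2,\dots,n$ lets you collapse the whole argument into a single parity-free recurrence $D_n=-D_{n-2}$ for $n\ge 5$. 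The identification $N=D_{n-2}$ via the shift $(k,l)\mapsto(k+2,l+2)$, with $(1-(-1)^{k+2})/2=(1-(-1)^k)/2$, is exactly right. Your route is a bit shorter and more uniform; the paper's route has the minor advantage that each recurrence drops the order by only one, so one never needs to verify that the shifted submatrix reproduces the full template.
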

\begin{proof}
If $n\ge 2$, then subtracting the $2n$th column from the $(2n-1)$th column of 
$D_{2n}$, and thereafter expanding the determinant along the $2n$th 
(last) column,   we find that
    \begin{eqnarray*}
D_{2n}&=&\left|\begin{array}{rrrrrrrrrrr}
1 & 1 & 1 & 1 & 1 & 1 &\dots    &1 &  1 & 0 \\
1 & 0 & 1 & 1 & 1 & 1 &\dots  &1 &  1 & 0  \\
0 & 1 & 1 & 1 & 1 & 1 &\dots  &1 &  1 & 0 \\
0 & 0 & 1 & 0 & 1 & 1 &\dots  &1 &  1 & 0 \\
0 & 0 & 0 & 1 & 1 & 1 & \dots &1    & 1 & 0 \\
0 & 0 & 0 & 0 & 1 & 0 & \dots &1   & 1 & 0 \\
\vdots & \vdots &\vdots &\vdots&\vdots  &\vdots &\vdots &\vdots&\ddots 
&\vdots \\
0 & 0 & 0 & 0 & 0 & 0 & \dots & 0  & 1 & -1  
\end{array}\right|\\
(21)\qquad\qquad &=&-
\left|\begin{array}{rrrrrrrrrr}
1 & 1 & 1 & 1 & 1 & 1 &\dots    &1 &  1  \\
1 & 0 & 1 & 1 & 1 & 1 &\dots  &1 &  1   \\
0 & 1 & 1 & 1 & 1 & 1 &\dots  &1 &  1  \\
0 & 0 & 1 & 0 & 1 & 1 &\dots  &1 &  1  \\
0 & 0 & 0 & 1 & 1 & 1 & \dots &1    & 1  \\
0 & 0 & 0 & 0 & 1 & 0 & \dots &1   & 1  \\
\vdots & \vdots &\vdots &\vdots&\vdots  &\vdots &\vdots &\ddots&\vdots 
\\
0 & 0 & 0 & 0 & 0 & 0 & \dots & 1  & 1   
\end{array}\right|=-D_{2n-1}.\qquad\qquad\qquad\qquad
  \end{eqnarray*}

Similarly, if $n\ge 3$, then subtracting the $(2n-1)$th column from the $(2n-2)$th 
column of $D_{2n-1}$, then  expanding the determinant along the 
$(2n-1)$th (last) row, and 
thereafter expanding the determinant along the 
$(2n-2)$th (last) column,  we find that
          \begin{eqnarray*}
D_{2n-1}&=&\left|\begin{array}{rrrrrrrrrrr}
1 & 1 & 1 & 1 & 1 & 1 &\dots    &1 &  1 & 0 \\
1 & 0 & 1 & 1 & 1 & 1 &\dots  &1 &  1 & 0  \\
0 & 1 & 1 & 1 & 1 & 1 &\dots  &1 &  1 & 0 \\
0 & 0 & 1 & 0 & 1 & 1 &\dots  &1 &  1 & 0 \\
0 & 0 & 0 & 1 & 1 & 1 & \dots &1    & 1 & 0 \\
0 & 0 & 0 & 0 & 1 & 0 & \dots &1   & 1 & 0 \\
\vdots & \vdots &\vdots &\vdots&\vdots  &\vdots &\vdots &\ddots&\vdots 
&\vdots \\
0 & 0 & 0 & 0 & 0 & 0 & \dots & 1  & 0 & 1  \\
0 & 0 & 0 & 0 & 0 & 0 & \dots & 0  & 1 & 0  
\end{array}\right|\\
(22)\qquad\qquad\qquad &=&-
\left|\begin{array}{rrrrrrrrrrr}
1 & 1 & 1 & 1 & 1 & 1 &\dots    &1 &  1 & 0 \\
1 & 0 & 1 & 1 & 1 & 1 &\dots  &1 &  1 & 0  \\
0 & 1 & 1 & 1 & 1 & 1 &\dots  &1 &  1 & 0 \\
0 & 0 & 1 & 0 & 1 & 1 &\dots  &1 &  1 & 0 \\
0 & 0 & 0 & 1 & 1 & 1 & \dots &1    & 1 & 0 \\
0 & 0 & 0 & 0 & 1 & 0 & \dots &1   & 1 & 0 \\
\vdots & \vdots &\vdots &\vdots&\vdots  &\vdots &\vdots &\ddots&\vdots 
&\vdots \\
0 & 0 & 0 & 0 & 0 & 0 & \dots & 1  & 1 & 0  \\
0 & 0 & 0 & 0 & 0 & 0 & \dots & 0  & 1 & 1  
\end{array}\right|\qquad\qquad\qquad\qquad\\
&=& -\left|\begin{array}{rrrrrrrrrrr}
1 & 1 & 1 & 1 & 1 & 1 &\dots    &1 &  1 & 1 \\
1 & 0 & 1 & 1 & 1 & 1 &\dots  &1 &  1 & 1  \\
0 & 1 & 1 & 1 & 1 & 1 &\dots  &1 &  1 & 1 \\
0 & 0 & 1 & 0 & 1 & 1 &\dots  &1 &  1 & 1 \\
0 & 0 & 0 & 1 & 1 & 1 & \dots &1    & 1 & 1 \\
0 & 0 & 0 & 0 & 1 & 0 & \dots &1   & 1 & 1 \\
\vdots & \vdots &\vdots &\vdots&\vdots  &\vdots &\vdots &\ddots&\vdots 
&\vdots \\
0 & 0 & 0 & 0 & 0 & 0 & \dots & 1  & 0 & 1  \\
0 & 0 & 0 & 0 & 0 & 0 & \dots & 0  & 1 & 1  
\end{array}\right|=-D_{2n-3}.
  \end{eqnarray*}
From (22) and the fact that $D_3=\left|\begin{array}{rrr}
1 & 1 & 1\\
1 & 0 & 1\\
0 & 1 & 1\\
 \end{array}\right|=-1$ it follows that for $n\ge 2$
$D_{2n-1}=(-1)^{n-1}$, which substituting in (21) gives 
$D_{2n}=(-1)^n$. This shows that $D_{2n}=D_{2n+1}=(-1)^n$
for all $n\ge 2$, as desired.
\end{proof}
 \begin{proof}[Proof of Proposition $2$]
By (6) of Definition 2 with $2n$ instead of $n$ we have
 $$
K_{2n}'=\left|\begin{array}{cccccccccccc}
1 & 1& 1 & 1 &  \dots   &1 & 1  & 1 & 1 &1 \\
1 & 1 & 1 & 1 & \dots    &1 & 1 & 1 & 1 &0 \\
1 & 0 & 1 & 1 & \dots  &1 & 1 & 1 & 1 & 0 \\
0 & 1 & 1 & 1 & \dots  &1 & 1 & 1 & 1 & 0\\
0 & 0 & 1 & 0 & \dots  &1 & 1 & 1 & 1 & 0\\
0 & 0 & 0 & 1 & \dots &1   & 1 & 1 & 1 & 0\\
0 & 0 & 0 & 0 & \dots &1  & 1 & 1 & 1 & 0\\
\vdots & \vdots &\vdots &\vdots &\vdots &\vdots &\vdots&\ddots 
&\vdots &\vdots\\
0 & 0 & 0 & 0 & \dots  & 0 &  1 & 0  & 1 & 0\\
0 & 0 & 0 & 0 &  \dots & 0 & 0 & 0 & 1 & 0
\end{array}\right|,
  $$
(the determinant $K_{2n}'$ is of order $2n-4$),
whence after the expansion along the $(2n-4)$th (last) column,
and then expanding this along the $(2n-5)$th (last) row,
for all $n\ge 5$ we obtain
\begin{eqnarray*}
K_{2n}'&=&-\left|\begin{array}{cccccccccccc}
1 & 1& 1 & 1 &  \dots   &1 & 1  & 1 & 1  \\
1 & 0 & 1 & 1 & \dots  &1 & 1 & 1 & 1  \\
0 & 1 & 1 & 1 & \dots  &1 & 1 & 1 & 1 \\
0 & 0 & 1 & 0 & \dots  &1 & 1 & 1 & 1 \\
0 & 0 & 0 & 1 & \dots &1   & 1 & 1 & 1 \\
0 & 0 & 0 & 0 &  \dots &1  & 1 & 1 & 1 \\
\vdots & \vdots &\vdots &\vdots &\vdots &\vdots &\ddots&\vdots 
&\vdots \\
0 & 0 & 0 & 0 & \dots  & 0 &  1 & 0  & 1\\ 
0 & 0 & 0 & 0 & \dots  & 0 &  0 & 0  & 1 
\end{array}\right|\\
&=&
-\left|\begin{array}{cccccccccccc}
1 & 1& 1 & 1 &  \dots   &1 & 1  & 1 & 1  \\
1 & 0 & 1 & 1 & \dots  &1 & 1 & 1 & 1  \\
0 & 1 & 1 & 1 & \dots  &1 & 1 & 1 & 1 \\
0 & 0 & 1 & 0 & \dots  &1 & 1 & 1 & 1 \\
0 & 0 & 0 & 1 & \dots &1   & 1 & 1 & 1 \\
0 & 0 & 0 & 0 & \dots &1  & 1 & 1 & 1 \\
\vdots & \vdots &\vdots &\vdots &\vdots &\vdots& \ddots&\vdots 
&\vdots \\
0 & 0 & 0 & 0 & \dots  & 0 &  1 & 1  & 1\\ 
0 & 0 & 0 & 0 & \dots  & 0 &  0 & 1  & 0 
\end{array}\right|=-D_{2n-6},
  \end{eqnarray*}
where the determinant $D_{2n-6}$ is defined by (19).
Then the above equality and (20) of Lemma 1 yield 
  $$
K_{2n}'=-D_{2(n-3)}=-(-1)^{n-3}=(-1)^n\quad 
{\rm for\,\, all\,\,} n\ge 5.\leqno(23)
   $$
Further, by (6) of Definition 2 with $2n-1$ instead of $n$ we have
  $$
K_{2n-1}'=\left|\begin{array}{cccccccccccc}
1 & 1& 1 & 1 &  \dots   &1 & 1  & 1 & 1 &1 \\
1 & 1 & 1 & 1 & \dots    &1 & 1 & 1 & 1 &0 \\
1 & 0 & 1 & 1 & \dots  &1 & 1 & 1 & 1 & 0 \\
0 & 1 & 1 & 1 & \dots  &1 & 1 & 1 & 1 & 0\\
0 & 0 & 1 & 0 & \dots  &1 & 1 & 1 & 1 & 0\\
0 & 0 & 0 & 1 & \dots &1   & 1 & 1 & 1 & 0\\
0 & 0 & 0 & 0 & \dots &1  & 1 & 1 & 1 & 0\\
\vdots & \vdots &\vdots &\vdots &\vdots &\vdots &\vdots&\ddots 
&\vdots &\vdots\\
0 & 0 & 0 & 0 & \dots & 0 &  1 & 1  & 1 & 0\\
0 & 0 & 0 & 0 &  \dots & 0 & 0 & 0 & 1 & 0
\end{array}\right|,
  $$
(the determinant $K_{2n-1}'$ is of order $2n-5$),
whence after the expansion along the $(2n-5)$th (last) column,
and then expanding this along the $(2n-6)$th (last) row,
for all $n\ge 5$ we obtain
\begin{eqnarray*}
K_{2n-1}'&=&\left|\begin{array}{cccccccccccc}
1 & 1& 1 & 1 &  \dots   &1 & 1  & 1 & 1  \\
1 & 0 & 1 & 1 & \dots  &1 & 1 & 1 & 1  \\
0 & 1 & 1 & 1 & \dots  &1 & 1 & 1 & 1 \\
0 & 0 & 1 & 0 & \dots  &1 & 1 & 1 & 1 \\
0 & 0 & 0 & 1 & \dots &1   & 1 & 1 & 1 \\
0 & 0 & 0 & 0 &  \dots &1  & 1 & 1 & 1 \\
\vdots & \vdots &\vdots &\vdots &\vdots &\vdots &\ddots&\vdots 
&\vdots \\
0 & 0 & 0 & 0 & \dots  & 0 &  1 & 1  & 1\\ 
0 & 0 & 0 & 0 & \dots  & 0 &  0 & 0  & 1 
\end{array}\right|\\
&=&
-\left|\begin{array}{cccccccccccc}
1 & 1& 1 & 1 &  \dots   &1 & 1  & 1 & 1  \\
1 & 0 & 1 & 1 & \dots  &1 & 1 & 1 & 1  \\
0 & 1 & 1 & 1 & \dots  &1 & 1 & 1 & 1 \\
0 & 0 & 1 & 0 & \dots  &1 & 1 & 1 & 1 \\
0 & 0 & 0 & 1 & \dots &1   & 1 & 1 & 1 \\
0 & 0 & 0 & 0 & \dots &1  & 1 & 1 & 1 \\
\vdots & \vdots &\vdots &\vdots &\vdots &\vdots& \ddots&\vdots 
&\vdots \\
0 & 0 & 0 & 0 & \dots  & 0 &  1 & 0  & 1\\ 
0 & 0 & 0 & 0 & \dots  & 0 &  0 & 1  & 1 
\end{array}\right|=D_{2n-7},
  \end{eqnarray*}
where the determinant $D_{2n-7}$ is defined by (19).
Then the above equality and (20) of Lemma 1 yield 
  $$
K_{2n-1}'=D_{2(n-4)+1}=(-1)^{n-4}=(-1)^{n}\quad 
{\rm for\,\, all\,\,} n\ge 5.\leqno(24)
   $$
Finally, the equalities (23) and (24) and the fact that 
$K_7'=K_8'=1$ yield the assertion of Proposition 2.
 \end{proof}

 \end{document}